\def\R{{{\mathbb R}}}
\def\M{{{\mathcal M}}}
\def\normal{{{\vec{n}}}}
\def\d{{{\partial}}}
\newcommand{\inprod}[2]{\left\langle #1 \, , \, #2 \right\rangle}
\newtheorem{theorem}{Theorem}[section]
\newtheorem{lemma}[theorem]{Lemma}
\newtheorem{proposition}[theorem]{Proposition}
\numberwithin{equation}{section}
\title{A Pinching Estimate for Convex Hypersurfaces Evolving Under a Nonhomogeneous Variant of Mean Curvature Flow}
\author{Tim Espin\footnote{Tim Espin was supported by The Maxwell Institute Graduate School in Analysis and its Applications, a Centre for Doctoral Training funded by the UK Engineering and Physical Sciences Research Council (grant EP/L016508/01), the Scottish Funding Council, Heriot-Watt University and the University of Edinburgh. \\ \textit{Mathematics Subject Classification}: 53C44, 35B40. \\ \textit{Keywords}: Convex Hypersurfaces, Nonhomogeneous Curvature Flow.}}
\address{Tim Espin, Maxwell Institute for Mathematical Sciences, School of Mathematics, University of Edinburgh, Edinburgh, UK, EH9 3FD}
\email{Tim.Espin@ed.ac.uk}
\begin{document}

\begin{abstract}
	We study a variant of the mean curvature flow for closed, convex hypersurfaces where the normal velocity is a nonhomogeneous function of the principal curvatures. We show that if the initial hypersurface satisfies a certain pinching condition, then this is preserved and the flow converges to a sphere under rescaling.
\end{abstract}

\maketitle
	
\section{Introduction}

Let $\M^n$ be a closed, orientable $n$-dimensional smooth manifold, and let $F_0 : \M^n \longrightarrow \R^{n+1}$ be a strictly convex immersion of $\M^n$ as a hypersurface into Euclidean space. We study the curvature flow given by
\begin{equation}\label{TheEquation}
\begin{cases}
\d_t F(p,t) = -f(H)\normal(p,t) \, , & \ \ \ \ t \geq 0 \\
F(p,0) = F_0(p) \, , &
\end{cases}
\end{equation} where $\normal(p,t)$ is the unit outer normal vector, $H$ is the mean curvature, and $f:\R^+ \longrightarrow \R^+$ is a smooth, convex function satisfying the conditions
\begin{subequations}
	\begin{align}
	& f(0) \geq 0 \, , \label{Positive} \\
	& f'(x) > 0 \ \ \forall x \geq 0 \, . \label{Increasing}
	\end{align}
\end{subequations}
The surface $F(\cdot,t)$ is denoted by $\M_t$. \\

We are interested specifically in the function \[
f(H) = H(\ln \hat{H})^\alpha \, ,
\] where $\alpha > 0$ and $\hat{H} = H + H_0$, with $H_0 \geq e$ a constant chosen so that $f$ satisfies our conditions \eqref{Positive} and \eqref{Increasing}. Note that this is a nonhomogeneous function of the principal curvatures, unlike many previously studied curvature flows (see for example \cite{Andrews},\cite{Chow},\cite{Huisken1984}). This choice of $f$ satisfies
\begin{subequations}
	\begin{align}
	f' & = (\ln\hat{H})^{\alpha-1}\left[\ln\hat{H} + \alpha\frac{H}{\hat{H}}\right] \, , \\
	f'' & = \frac{\alpha}{\hat{H}}(\ln\hat{H})^{\alpha-2} \left[ \ln\hat{H} + \frac{H_0 \ln\hat{H}}{\hat{H}} + (\alpha - 1)\frac{H}{\hat{H}} \right] \, , \\
	\frac{f''}{f'} & = \frac{\alpha}{\hat{H}\ln\hat{H}}\left( 1 + \frac{H_0\ln\hat{H} - H}{\hat{H}\ln\hat{H} + \alpha H} \right) \, , \\
	Hf' - f & = \alpha\frac{H^2}{\hat{H}}(\ln\hat{H})^{\alpha-1} \, . \label{Hf'-f}
	\end{align}
\end{subequations}

The important properties are that \[
f', \ f'', \ Hf' - f > 0 \ \text{ and } \ \frac{Hf''}{f'} \leq 2\alpha \, ,
\] due to our choice of $H_0$.

We wish to prove that the flow given above preserves the convexity of the initial hypersurface. To do this we follow the method of Schulze in \cite{Schulze2} and consider the quantity $K/H^n$, where $K$ is the Gauss curvature. Schulze's approach was based on that of Chow in \cite{Chow}, who was the first to consider this quantity. In particular we prove the following:

\begin{theorem}
	Let $F(\cdot,t)$ be a family of hypersurfaces evolving under the curvature flow \eqref{TheEquation}. If there exists a positive constant $C(n,\alpha) < 1/n^n$ such that the initial hypersurface is pinched in the sense that \[
	\frac{K(p)}{H^n(p)} \geq C(n,\alpha) \quad \forall p \in \M_0 \, ,
	\] then this is preserved under the flow. Moreover, the family of pinched hypersurfaces converges to a sphere after rescaling.
\end{theorem}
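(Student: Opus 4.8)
The plan is to follow the approach of Chow \cite{Chow} and Schulze \cite{Schulze2}: rather than attacking convexity directly, I would track the scale-invariant quantity $Q = K/H^n$ --- which equals $1/n^n$ precisely on round spheres --- and use the maximum principle to show that $\min_{\M_t} Q$ is non-decreasing, so that the pinching $Q \geq C(n,\alpha)$ is preserved. The first step is to record the evolution equations under \eqref{TheEquation}. With $\mathcal{F} = f(H)$, the standard identities $\d_t g_{ij} = -2\mathcal{F}h_{ij}$ and $\d_t h_{ij} = \nabla_i\nabla_j\mathcal{F} - \mathcal{F}h_i^k h_{kj}$, together with Simons' identity and the fact that $\mathcal{F}$ is a function of $H$ alone (so that the leading operator is just $f'\Delta$), yield
\begin{align*}
\d_t H &= f'\Delta H + f''|\nabla H|^2 + f\,|A|^2 \, , \\
\d_t h_{ij} &= f'\Delta h_{ij} + f''\nabla_i H\,\nabla_j H + f'|A|^2\,h_{ij} - (Hf' + f)\,h_i^k h_{kj} \, .
\end{align*}
Combining these through $\d_t\log\det h_{ij} = h^{ij}\d_t h_{ij}$ and the analogous identity for $g_{ij}$ gives an evolution equation for $P := \log Q = \log\det h_{ij} - \log\det g_{ij} - n\log H$.

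A computation puts this equation in the form
\[
\d_t P = f'\Delta P + \inprod{b}{\nabla P} + G + R \, ,
\]
where $b$ is a vector field, $R$ is a zeroth-order reaction term and $G$ is a quadratic form in $\nabla h_{ij}$. The reaction term simplifies to
\[
R = \frac{(Hf' - f)\,(n|A|^2 - H^2)}{H} \, ,
\]
which is non-negative because $Hf' - f > 0$ by hypothesis and $n|A|^2 \geq H^2$ by Cauchy--Schwarz, with equality exactly at umbilic points. Hence, by Hamilton's maximum principle, it suffices to show that $G + R \geq 0$ at an interior spatial minimum of $P$, where $\nabla P = 0$. At such a point the relation $h^{jk}\nabla_i h_{jk} = \tfrac{n}{H}\nabla_i H$ lets one remove $\nabla H$ in favour of the full gradient $\nabla h$, turning $G$ into a quadratic form in $\nabla h$ whose coefficients involve the principal curvatures and the ratio $f''/f'$. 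I expect this step to be the main obstacle: the nonhomogeneity produces $f''$-terms in $G$ that, unlike in the homogeneous case, are not of a favourable sign, and the argument must absorb them using $R$ together with the ``good'' $f'$-part of $G$. It is here that the structural bounds $f'' > 0$ and $Hf''/f' \leq 2\alpha$ enter, to control the offending terms, and here that the pinching hypothesis --- which bounds $\kappa_{\min}/\kappa_{\max}$ below and thereby controls all the curvature ratios appearing in $G$ --- makes the quadratic form non-negative, provided $C(n,\alpha)$ is taken sufficiently close to $1/n^n$.

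With pinching preserved, convergence to a sphere follows the now-standard programme for convex curvature flows \cite{Huisken1984,Chow,Schulze2}. Since the $\M_t$ are convex with $f(H) > 0$, comparison with shrinking spheres shows the flow exists on a maximal interval $[0,T)$ with $T < \infty$ and contracts to a point $x_0$; the preserved pinching excludes lower-dimensional limits, so this is a genuine point singularity. One then rescales, e.g. $\tilde F = \psi(t)(F - x_0)$ with $\psi$ chosen to keep the enclosed volume fixed; near $T$ one has $H \to \infty$ and $f(H) = H(\ln\hat{H})^\alpha$, so the rescaled flow is only a slowly-varying (logarithmic) perturbation of the mean curvature flow and has uniformly parabolic structure. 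The pinching $Q \geq C(n,\alpha)$ is scale-invariant and so passes to the rescaled surfaces; combined with Bernstein-type estimates on $\nabla A,\ \nabla^2 A,\ \dots$ obtained from the maximum principle, this gives uniform $C^\infty$ bounds and hence smooth subconvergence of the rescaled flow to a limit convex hypersurface. To identify the limit I would prove, via a further maximum-principle argument on $(|A|^2 - H^2/n)/H^{2-\delta}$ for a suitable $\delta = \delta(n,\alpha) > 0$, the improving estimate $|A|^2 - H^2/n \leq C_\delta H^{2-\delta}$; since under the rescaling $\tilde H$ stays bounded while $H \to \infty$, this forces $|A|^2 - H^2/n \to 0$, so every subsequential limit is totally umbilic and therefore a round sphere. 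A linearisation argument about the sphere then upgrades this to full, exponential convergence of the normalised flow.
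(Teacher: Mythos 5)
Your plan for the preservation of the pinching is essentially the paper's: track $\gamma = K/H^n$, derive its evolution equation, and apply the maximum principle at an interior spatial minimum where the gradient terms drop out, leaving the reaction term $\tfrac{Hf'-f}{H}(n|A|^2 - H^2)\gamma \geq 0$ plus a quadratic form in $\nabla h$ that is made non-negative by the bound $Hf''/f' \leq 2\alpha$ together with the pinching (which in the paper is packaged as $h_{ij} \geq \varepsilon H g_{ij}$ and $|b^{ij} - \tfrac{n}{H}g^{ij}| \leq \tfrac{\varepsilon^2}{8\alpha H}$). Working with $\log(K/H^n)$ rather than $K/H^n$ is a cosmetic change. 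So the first half of the theorem is captured.

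The convergence step, however, contains a genuine gap. You propose proving an improving pinching estimate of Huisken type, $|A|^2 - H^2/n \leq C_\delta H^{2-\delta}$, i.e.\ a \emph{polynomial} decay of the normalised pinching deficit. For the speed $f(H) = H(\ln\hat H)^\alpha$ this estimate does not close under the maximum principle. The structural reason: writing the test function as $g_\sigma = \big(\tfrac{1}{n^n} - \tfrac{K}{H^n}\big)\phi(H)$, the evolution of $g_\sigma$ produces the competing terms
\[
-\phi\gamma\,\frac{Hf'-f}{H}\,(n|A|^2 - H^2) \quad\text{and}\quad f\,g\,\phi'\,|A|^2 \, .
\]
Since $Hf'-f = \alpha\frac{H^2}{\hat H}(\ln\hat H)^{\alpha-1}$ while $f = H(\ln\hat H)^\alpha$, the good term carries one \emph{fewer} power of $\ln\hat H$ than $f$ does. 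With a polynomial weight $\phi = H^{\delta}$ one has $\phi' \sim \delta H^{\delta-1}$ and the bad term $f g\phi'|A|^2 \sim \delta g H^{\delta+2}(\ln\hat H)^\alpha$ dominates the good term $\sim g H^{\delta+2}(\ln\hat H)^{\alpha-1}$ as $H\to\infty$, regardless of how small $\delta$ is. The paper instead chooses the logarithmic weight $\phi(H) = (\ln\hat H)^\sigma$, so that $\phi' = \sigma(\ln\hat H)^{\sigma-1}/\hat H$; the extra $1/\hat H$ factor exactly matches the $H/\hat H$ in $Hf'-f$, both sides come out proportional to $g\tfrac{H^3}{\hat H}(\ln\hat H)^{\alpha+\sigma-1}$, and the sign is controlled by choosing $\sigma \leq C\alpha\delta$. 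In short, the nonhomogeneity forces a logarithmic (rather than polynomial) improvement of pinching, and your proposed test function cannot see this. You would also need to replace the volume-preserving rescaling with a scaling calibrated to the singular rate of this speed (the paper constructs $G_\alpha$ from the ODE $J_\alpha' = 1/(x^3(\ln\hat x)^\alpha)$ and classifies type-1/type-2 singularities accordingly) in order to guarantee that the rescaled mean curvature $\tilde H$ is bounded and that $H\to\infty$ along the blow-up sequence, which is what makes the logarithmic estimate $1/n^n - K/H^n \leq C/(\ln\hat H)^\sigma$ force $K/H^n \to 1/n^n$ on the limit.
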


Heuristically, the pinching condition indicates how close the ratio between the smallest and largest eigenvalues is to 1. If the constant $C$ is very close to $1/n^n$, the principal curvatures of the initial hypersurface are close to one another, and the surface is close to being a sphere. The preservation of the pinching condition therefore tells us that the hypersurfaces become more spherical as the flow progresses.

The theorem above is similar to that proved by Schulze in \cite{Schulze2} for flows of the form $f(H) = H^k, \ k \geq 1$. There are a wide range of results published on flows of convex hypersurfaces for which $f$ is a homogeneous function of the principal curvatures. Huisken's 1984 paper \cite{Huisken1984} showed that convex hypersurfaces evolving under the mean curvature flow $f(H) = H$ contract to a spherical point in finite time, and converge smoothly to a sphere after appropriate rescaling. Chow obtained similar results for a variety of other flows where the normal velocity is not necessarily a function of the mean curvature, for instance the $n$th root of the Gauss curvature in \cite{Chow}. These were extended by Andrews in \cite{Andrews} to a more general result on homogeneous of degree one velocities. In \cite{Chow}, Chow also proved a result for normal velocities whose degree of homogeneity is greater than 1. It was this work that led to the aforementioned paper of Schulze. The flow of convex hypersurfaces under the nonhomogeneous function $f(H) = \hat{H}/\ln\hat{H}$ was studied by Alessandroni and Sinestrari in \cite{A-S} using maximum principle methods similar to those of Huisken in \cite{Huisken1984}. However, we were not able to follow the proof of Theorem 3 in their paper and we don't believe that their techniques yield our result.

The organisation of this paper is as follows. In Section 2 we justify the short-time existence of solutions. This is followed by a statement of the well-known evolution equations for some geometric quantities under the flow \eqref{TheEquation} along with the elementary result that the flow preserves strict and mean-convexity in Section 3. In Section 4 we derive the evolution equation for $K/H^n$ and prove that if the initial hypersurface is pinched strongly enough then this pinching is preserved by the flow. Finally, in section 5 we show that under rescaling the pinched flow converges to a sphere.

\section{Short-Time Existence}

At this point we briefly justify the short-time existence of unique smooth solutions to equation \eqref{TheEquation}, using results on nonlinear parabolic PDEs from \cite{GerhardtBook}. We begin by writing $\M^n$ locally in coordinate patches as a graph over $\Omega \subset \R^n$. More precisely, suppose that locally $\M^n = \text{graph}(u_0)$, $u_0 : \Omega \longrightarrow \R$, and $F$ is given by $F(p,t) = (p,u(p,t))$. In this setting \eqref{TheEquation} reduces to the initial value problem
\begin{equation}\label{TheEquationGraph1}
\begin{cases}
\d_t u = -\sqrt{1 + |Du|^2} \, f\left( -D\cdot\frac{Du}{\sqrt{1+ |Du|^2}} \right) \\
u(p,0) = u_0(p) \, ,
\end{cases}
\end{equation}
where $D$ and $D\cdot$ are the usual gradient and divergence on $\R^n$ and $u_0$ is smooth. This comes from the expressions given in Appendix A of \cite{EckerBook} for the mean curvature of a graph $u$, \[
H = -D\cdot\frac{Du}{\sqrt{1+ |Du|^2}} = -\frac{1}{\sqrt{1 + |Du|^2}}\left( \Delta u - \frac{Du\cdot (D^2 u) Du}{1+ |Du|^2} \right) \, ,
\] where $D^2u$ is the Hessian matrix of $u$, and the outer (upper pointing) normal \[
\normal = \frac{(-Du , 1)}{\sqrt{1 + |Du|^2}} \, .
\] Moreover, the metric and inverse metric on the graph $u$ are given by \[
g_{ij} = \delta_{ij} + \d_i u \d_j u \, , \qquad g^{ij} = \delta^{ij} - \frac{\d_i u \d_j u}{1 + |Du|^2} \, ,
\] so we can rewrite \eqref{TheEquationGraph1} as
\begin{equation}\label{TheEquationGraph2}
\d_t u = -\sqrt{1 + |Du|^2} \, f\left( \frac{-g^{ij}D^2_{ij}u}{\sqrt{1 + |Du|^2}} \right) \, .
\end{equation}

In particular, note that \[
\frac{d}{d(g^{ij}D^2_{ij}u)} \sqrt{1 + |Du|^2} \, f\left( \frac{-g^{ij}D^2_{ij}u}{\sqrt{1 + |Du|^2}} \right) = -f'\left( \frac{-g^{ij}D^2_{ij}u}{\sqrt{1 + |Du|^2}} \right) < 0
\] by virtue of \eqref{Increasing}. We can therefore invoke theorems 2.5.7 and 2.5.9 from \cite{GerhardtBook} which guarantee the existence of a unique smooth solution to \eqref{TheEquationGraph1} on a short time interval. This can then be extended in the usual way via a compactness argument to a full solution of \eqref{TheEquation}.

\section{Evolution Equations for Geometric Quantities}

The following proposition is well known for flows of the type \eqref{TheEquation}. See for example Appendix A of \cite{EckerBook} or \cite{ZhuBook} for outlines of the proofs in the mean curvature flow case.

\begin{proposition}\label{TimeChange}
	For surfaces evolving under \eqref{TheEquation} we have the following evolution equations for geometric quantities:
	\begin{subequations}
		\begin{align}
		\frac{\d}{\d t} g_{ij} & = -2f(H)h_{ij} \, , \\
		\frac{\d}{\d t} g^{ij} & = 2f(H)g^{ik}g^{jl}h_{kl} \, , \\
		\label{TimeChange3} \frac{\d}{\d t} h_{ij} & = f'(H)\Delta_{\M_t}h_{ij} + f''(H)\nabla_i H \nabla_j H - \big(f(H) + Hf'(H)\big)h_{ik}g^{kl}h_{lj} + f'(H)|A|^2h_{ij} \, , \\
		\label{TimeChange4} \frac{\d}{\d t} H \ & = f'(H)\Delta_{\M_t} H + f''(H) |\nabla H|^2 + f(H)|A|^2 \, , \\
		\frac{\d}{\d t} |A|^2 & = f'(H)\big[ \Delta_{\M_t}|A|^2 - 2|\nabla A|^2 + 2|A|^4 \big] + 2f''(H)g^{ij}g^{kl}h_{ik}(\nabla_j H)(\nabla_l H) \\
		& \nonumber \qquad + 2\text{tr}(A^3) \big[ f(H) - Hf'(H) \big] \, , \\
		\frac{\d}{\d t} \sqrt{g} & = -\sqrt{g}Hf(H)	\, .
		\end{align}
	\end{subequations}
\end{proposition}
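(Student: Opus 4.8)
The plan is to compute each evolution equation directly from the flow \eqref{TheEquation} by differentiating the relevant geometric quantity in $t$ and substituting $\partial_t F = -f(H)\normal$; these are the standard computations for mean-curvature-type flows, carried out here with the abstract speed $f(H)$ in place of $H$. First I would establish the metric variations: since $g_{ij} = \inprod{\partial_i F}{\partial_j F}$, differentiating gives $\partial_t g_{ij} = 2\inprod{\partial_i(-f(H)\normal)}{\partial_j F} = -2f(H)\inprod{\partial_i \normal}{\partial_j F} = -2f(H)h_{ij}$, using $\inprod{\normal}{\partial_j F}=0$ and the Weingarten relation $\partial_i\normal = h_i^{\ k}\partial_k F$. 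The formula for $\partial_t g^{ij}$ then follows from $\partial_t(g^{ik}g_{kj})=0$. The volume form obeys $\partial_t\sqrt{g} = \tfrac12\sqrt{g}\,g^{ij}\partial_t g_{ij} = -\sqrt{g}\,Hf(H)$.

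Next I would treat the second fundamental form. Writing $h_{ij} = -\inprod{\partial_i\partial_j F}{\normal}$ (equivalently $\inprod{\partial_i\partial_j F}{\normal}$ up to sign conventions), one differentiates in $t$, commutes $\partial_t$ with the spatial derivatives, and uses $\partial_t\normal = \nabla^{\M_t}(f(H)) = f'(H)\nabla H$ (the normal stays a unit vector, so $\partial_t\normal$ is tangential, and pairing with $\partial_i F$ identifies it with the gradient of the speed). This produces a term $f'(H)\nabla_i\nabla_j H$ plus lower-order curvature terms. The key algebraic step is to convert $\nabla_i\nabla_j H$ into $\Delta_{\M_t} h_{ij}$ using the Simons-type identity
\[
\Delta_{\M_t} h_{ij} = \nabla_i\nabla_j H + H h_{ik}g^{kl}h_{lj} - |A|^2 h_{ij} \, ,
\]
which comes from commuting covariant derivatives via Codazzi and Gauss. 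Substituting $\nabla_i\nabla_j H = \Delta_{\M_t}h_{ij} - Hh_{ik}g^{kl}h_{lj} + |A|^2 h_{ij}$ and expanding $f'(H)\nabla_i\nabla_j H$, and carefully collecting the reaction terms coming from $\partial_t\normal$ and from differentiating the Christoffel symbols, yields \eqref{TimeChange3}; the chain rule produces the $f''(H)\nabla_i H\nabla_j H$ term. Then \eqref{TimeChange4} follows by tracing: $H = g^{ij}h_{ij}$, so $\partial_t H = (\partial_t g^{ij})h_{ij} + g^{ij}\partial_t h_{ij}$, and the cross-term $2f(H)g^{ik}g^{jl}h_{kl}h_{ij} = 2f(H)|A|^2$ combines with $-g^{ij}(f(H)+Hf'(H))h_{ik}g^{kl}h_{lj} + f'(H)|A|^2 H = (f'(H)|A|^2 H - (f(H)+Hf'(H))|A|^2)$, leaving $f(H)|A|^2$ after cancellation; the Laplacian and gradient terms trace through directly. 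Finally, $\partial_t|A|^2 = \partial_t(g^{ik}g^{jl}h_{ij}h_{kl})$ is obtained the same way, combining the metric variations (which contribute the $\operatorname{tr}(A^3)$ terms with the right coefficients) with the $h_{ij}$ evolution, and then using $\Delta_{\M_t}|A|^2 = 2h^{ij}\Delta_{\M_t}h_{ij} + 2|\nabla A|^2$ to package the second-derivative terms.

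The main obstacle is purely bookkeeping rather than conceptual: one must track the many reaction terms — those from $\partial_t g^{ij}$, from $\partial_t\normal$, and from the time derivative of the Christoffel symbols appearing when $\partial_t$ is commuted past covariant derivatives — and verify that the coefficients of $\operatorname{tr}(A^3)$, $|A|^2 h_{ij}$ and $h_{ik}g^{kl}h_{lj}$ come out exactly as stated, with all the $f$, $f'$, $f''$ factors correctly distributed by the chain rule. Since $f$ enters only through the scalar speed $f(H)$, these computations are formally identical to the mean curvature flow case with $H \rightsquigarrow f(H)$ and an extra $f'$ multiplying every term that arises from varying a curvature quantity, plus the $f''|\nabla H|^2$-type terms from the second derivative of the chain rule; I would therefore follow the derivation in Appendix A of \cite{EckerBook} essentially verbatim, inserting these factors, rather than redo it from scratch. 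No genuinely new difficulty arises from the nonhomogeneity of $f$ at this stage.
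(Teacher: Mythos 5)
Your proposal is correct and follows exactly the route the paper intends: the paper gives no proof of this proposition beyond pointing to the standard mean curvature flow computations in Appendix A of \cite{EckerBook} and \cite{ZhuBook}, which you reproduce with the speed $f(H)$ in place of $H$, the chain rule supplying the $f'$, $f''$ factors, and Simons' identity converting $\nabla_i\nabla_j H$ into $\Delta_{\M_t} h_{ij}$. The coefficient bookkeeping you sketch (the trace giving $f|A|^2$ in \eqref{TimeChange4}, and the metric-variation contribution $4f\,\mathrm{tr}(A^3)$ combining with $-2(f+Hf')\,\mathrm{tr}(A^3)$ to give $2(f-Hf')\,\mathrm{tr}(A^3)$) checks out.
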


By applying Hamilton's Maximum Principle for tensors (Theorem 4.1 in \cite{Huisken1984}) to the evolution equation for $h_{ij}$ \eqref{TimeChange3}, and the usual parabolic maximum principle to the evolution equation for $H$ \eqref{TimeChange4}, we find that both convexity and strict mean-convexity are preserved by the flow:
\begin{proposition}
	Suppose $F_0$ is a hypersurface with everywhere-positive mean curvature $H(F_0) \geq \delta > 0$. Then $H(F(\cdot,t)) \geq \delta$ for all $t$. Moreover, if $F_0$ is convex then $h_{ij} \geq 0$ is preserved by the flow.
\end{proposition}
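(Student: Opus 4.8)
The plan is to read off both statements directly from the maximum principle applied to the evolution equations \eqref{TimeChange3} and \eqref{TimeChange4} of Proposition \ref{TimeChange}, using only the sign information on $f$ recorded in \eqref{Positive}, \eqref{Increasing} and the convexity of $f$.

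\emph{Preservation of strict mean-convexity.} Fix $T$ below the maximal existence time supplied by Section 2 and set $H_{\min}(t) = \min_{\M_t} H$. At a point of $\M_t$ realising this spatial minimum one has $\nabla H = 0$ and $\Delta_{\M_t} H \geq 0$, so \eqref{TimeChange4} gives
\[
\frac{\partial}{\partial t} H \;\geq\; f(H)\,|A|^2 \;\geq\; 0 ,
\]
since $f(H) \geq 0$ (as $f$ maps into $\R^+$) and $|A|^2 \geq 0$. By Hamilton's trick $H_{\min}$ is locally Lipschitz and the inequality above shows it is non-decreasing in the barrier sense on $[0,T]$, whence $H \geq H_{\min}(t) \geq H_{\min}(0) \geq \delta$ on every $\M_t$; letting $T$ approach the maximal time finishes this part. (One in fact has $|A|^2 \geq H^2/n$, so the lower bound improves along the flow, but this is not needed here.)

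\emph{Preservation of convexity.} Here I would invoke Hamilton's maximum principle for tensors (Theorem 4.1 of \cite{Huisken1984}), with the invariant, fibrewise convex, parallel set taken to be the cone of positive semidefinite symmetric bilinear forms, applied to \eqref{TimeChange3}. The leading term $f'(H)\Delta_{\M_t}h_{ij}$ is the Laplace--Beltrami operator multiplied by the positive scalar $f'(H) > 0$ (by \eqref{Increasing}), which is admissible, since the proof of the tensor maximum principle is unchanged when $\Delta_{\M_t}$ is replaced by any operator $a^{kl}\nabla_k\nabla_l$ with positive-definite coefficients $a^{kl}$. The term $f''(H)\nabla_i H \nabla_j H$ is a nonnegative-definite symmetric $2$-tensor, because $f$ is convex so $f'' \geq 0$; it only pushes $h_{ij}$ further into the cone and hence causes no difficulty. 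It therefore remains to verify the null-eigenvector condition for the reaction term
\[
N_{ij} \;=\; -\big(f(H)+Hf'(H)\big)h_{ik}g^{kl}h_{lj} + f'(H)|A|^2 h_{ij} .
\]
If $h_{ij} \geq 0$ and $h_{ij}v^j = 0$ for some $v \neq 0$, then $h_{ik}g^{kl}h_{lj}v^iv^j = g^{kl}(h_{ik}v^i)(h_{lj}v^j) = 0$ and $h_{ij}v^iv^j = 0$, so $N_{ij}v^iv^j = 0 \geq 0$ (the sign of $f(H)+Hf'(H)$ is irrelevant at a null eigenvector). Hamilton's maximum principle then yields $h_{ij} \geq 0$ for all $t$, i.e.\ convexity is preserved.

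I do not anticipate a genuine obstacle, since the result is elementary and all the required positivity is built into \eqref{Positive}--\eqref{Increasing} and the convexity of $f$. The one point deserving a word of care is the term $f''(H)\nabla_i H\nabla_j H$ in \eqref{TimeChange3}: because $\nabla_i H = g^{kl}\nabla_i h_{kl}$, this is quadratic in the first derivatives of $h_{ij}$ rather than a zeroth-order reaction term, so one either appeals to the version of Hamilton's principle that permits gradient terms of this type or, equivalently, discards it at the outset using that it is pointwise nonnegative-definite, as above. A minor bookkeeping remark is that on the compact $\M^n$ all relevant geometric quantities remain bounded on each $[0,T]$ with $T$ below the maximal time, so the maximum principle applies on each such interval and the conclusions pass to the full interval of existence.
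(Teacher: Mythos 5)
Your proof is correct and follows essentially the same route as the paper: the parabolic maximum principle applied to \eqref{TimeChange4} for the lower bound on $H$, and Hamilton's tensor maximum principle (Theorem 4.1 of \cite{Huisken1984}) applied to \eqref{TimeChange3} with the null-eigenvector check at $h_{ij}v^j=0$ for the preservation of $h_{ij}\ge 0$. The paper simply absorbs $f''(H)\nabla_i H\nabla_j H$ into the reaction tensor $N_{ij}$ and verifies the null-eigenvector condition directly, whereas you peel it off as a separately nonnegative-definite term; both are equivalent, and your additional remarks about the variable diffusion coefficient $f'(H)$ and the quadratic gradient term being outside the literal hypotheses of Hamilton's theorem are technically careful points that the paper leaves implicit.
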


\begin{proof}
	The result $H(F(\cdot,t)) \geq \delta$ for all $t$ follows from applying the parabolic maximum principle to \eqref{TimeChange4} in Proposition \ref{TimeChange}. To prove that convexity is preserved, we use the equation \eqref{TimeChange3}. In order to use Proposition 4.1 of \cite{Huisken1984}, set \[
	M_{ij} = h_{ij} \, , \ u^k = 0 \ \text{ and } \ N_{ij} = f''(H)\nabla_i H \nabla_j H - \big(f(H) + Hf'(H)\big)h_{ik}g^{kl}h_{lj} + f'(H)|A|^2h_{ij} \, .
	\] This $N_{ij}$ satisfies the null eigenvector condition since if $h_{ij}X^j = 0$ then \[
	N_{ij}X^iX^j = f''(H)\nabla_i H \nabla_j HX^iX^j = f''(H)(X^i\nabla_i H)^2 \geq 0 \, .
	\] The result follows from this.
\end{proof}

The importance of the above result is twofold. Firstly, it implies $H_{\min}(t)$ is increasing and the uniform parabolicity of \eqref{TheEquation} is not lost. Secondly by writing $|A|^2$ and $H^2$ in terms of the principal curvatures and expanding the square, we see that $|A|^2 \leq H^2$ for any convex surface. The preservation of convexity means that $|A|^2 \leq H^2$ on $F(\cdot,t)$ for all $t$, which will be used later. \\

The evolution equation for $H$ also leads to a finite upper bound on the maximal time of existence.
\begin{proposition}
	For surfaces evolving according to equation \eqref{TheEquation}, we have
	\begin{equation}\label{LowerBoundOnH}
	H_{\min}(t) \geq \left( \frac{1}{H_{\min}^2(0)} - \frac{2t}{n} \right)^{-1/2} \, .
	\end{equation}
	This in turn gives an upper bound on the maximal time of existence: \[
	T \leq \frac{n}{2H_{\min}^2(0)} < \infty \, .
	\]
\end{proposition}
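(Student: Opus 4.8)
The plan is to use the evolution equation for $H$ from Proposition~\ref{TimeChange}, namely \eqref{TimeChange4}, together with the fact that the flow preserves mean-convexity, to reduce the PDE to an ODE inequality at the point where $H$ attains its minimum. First I would recall that since $|A|^2 \leq H^2$ fails to help in the direction we want here; instead we use the standard inequality $|A|^2 \geq H^2/n$, valid on any hypersurface by Cauchy--Schwarz applied to the principal curvatures. Combined with $f(H) > 0$ (from \eqref{Positive} and \eqref{Increasing}), the term $f(H)|A|^2$ is bounded below by $f(H) H^2 / n > 0$.

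Next I would argue that $H_{\min}(t)$ is a Lipschitz function of $t$ (by Hamilton's trick, or by the general theory of the evolution of minima of solutions to parabolic equations), so it is differentiable almost everywhere, and at a time $t$ of differentiability we have, at a spatial point $p_t$ realising the minimum, $\nabla H(p_t) = 0$ and $\Delta_{\M_t} H(p_t) \geq 0$. Plugging this into \eqref{TimeChange4} and discarding the nonnegative Laplacian and gradient terms gives
\begin{equation*}
\frac{d}{dt} H_{\min}(t) \geq f(H_{\min}) |A|^2(p_t) \geq \frac{1}{n} f(H_{\min}) H_{\min}^2 \, .
\end{equation*}
Here one must be slightly careful: the precise choice of $f$ does not matter for the lower bound since we are already using only $f > 0$, but in fact we want a \emph{clean} ODE, so I would simply bound $f(H_{\min}) \geq$ the bare mean curvature flow velocity? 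No --- that is false in general. The cleanest route is to note that our specific $f(H) = H(\ln\hat H)^\alpha \geq H$ whenever $\hat H \geq e$, which holds since $H > 0$ and $H_0 \geq e$ forces $\ln \hat H \geq 1$. Hence $f(H_{\min}) \geq H_{\min}$ and therefore
\begin{equation*}
\frac{d}{dt} H_{\min}(t) \geq \frac{1}{n} H_{\min}^3 \, .
\end{equation*}

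Finally I would integrate this ODE inequality. Writing $\varphi(t) = H_{\min}(t)^{-2}$, the inequality becomes $\varphi'(t) \leq -2/n$, so $\varphi(t) \leq \varphi(0) - 2t/n$, which rearranges to exactly \eqref{LowerBoundOnH}. Since the right-hand side of \eqref{LowerBoundOnH} blows up (the quantity under the square root hits zero) as $t \to \tfrac{n}{2 H_{\min}^2(0)}$, and $H_{\min}(t)$ is increasing and finite on any interval of smooth existence, the solution cannot exist smoothly beyond $T = \tfrac{n}{2 H_{\min}^2(0)}$, giving the stated bound on the maximal time. The main obstacle --- really the only subtlety --- is the justification that one may differentiate $H_{\min}(t)$ and evaluate the spatial derivatives of $H$ at the minimising point; this is handled by Hamilton's trick together with compactness of $\M_t$, and I would cite this rather than reprove it.
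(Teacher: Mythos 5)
Your proof is correct and follows essentially the same approach as the paper: both use $\ln\hat{H}\geq 1$ and $|A|^2\geq H^2/n$ to bound $f(H)|A|^2 \geq H^3/n$ and then compare the evolution of $H_{\min}$ with the ODE $\psi' = \psi^3/n$. The paper simply cites the comparison principle where you unpack the Hamilton's-trick argument and integrate explicitly, but these are the same reasoning.
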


\begin{proof}
	Since $\ln\hat{H} \geq 1$ and $|A|^2 \geq \frac{1}{n}H^2$, we have $f(H)|A|^2 \geq \frac{1}{n}H^3$. We can therefore compare \eqref{TimeChange4} in Proposition \ref{TimeChange} with the ODE \[
	\frac{d}{dt}\psi = \frac{1}{n}\psi^3 \, , \quad \psi(0) = H_{\min}(0) \, .
	\] This has solution \[
	\psi(t) = \left( \frac{1}{H_{\min}^2(0)} - \frac{2t}{n} \right)^{-1/2} \, ,
	\] and so the lower bound on $H_{\min}$ \eqref{LowerBoundOnH} follows from the comparison principle. The upper bound on the maximal existence time $T$ follows directly from \eqref{LowerBoundOnH}.
\end{proof}

In exactly the same manner as \cite{Schulze1} we obtain the result that $\max_{\M_t}|A|^2$ and $\max_{\M_t}H^2$ must both tend to infinity as the maximal time of existence $T$ is approached. If this were not so we obtain bounds on the derivatives of $|A|^2$ and $H$, giving a smooth limiting surface, contradicting the maximality of $T$.

\section{Pinching with Gauss Curvature}

Our aim during this section is to show that even though $H$ blows up near a singularity of the flow, the ratios between the principal curvatures approach 1. Following the method of Schulze in \cite{Schulze2}, we first derive an evolution equation for the quantity \[
\gamma := \frac{K}{H^n} \, .
\] By the arithmetic-geometric mean inequality we have \[
0 \leq \gamma \leq \frac{1}{n^n} \, ,
\] with equality on the right hand side if and only if all the principal curvatures are equal. \\

Throughout this section, if $\zeta$ and $\xi$ are scalars we use the notation $\inprod{\nabla\zeta}{\nabla\xi} := g^{ab}\nabla_a\zeta\nabla_b\xi$, and $|\nabla\xi|^2 = \inprod{\nabla\xi}{\nabla\xi}$. The first step is to find the evolution equation for $K$:

\begin{lemma}
	Let $K = \det[g^{ik}h_{kj}]$ be the Gauss curvature of the strictly convex surface $\M_t$. Then we have:
	\begin{subequations}
	\begin{align}
	\label{TimeK} \d_t K = {} & K \left[ H(f-Hf') + f'b^{jk}\Delta_{\M_t} h_{jk} + f''b^{jk}\nabla_j H \nabla_k H + n|A|^2f' \right] \, ,\\
	\label{NablaK} \nabla_a K = {} & Kb^{jk}\nabla_a h_{jk} \, ,\\
	\label{LapK} \Delta_{\M_t} K = {} & Kb^{jk}\Delta_{\M_t}h_{jk} + \frac{n-1}{nK}|\nabla K|^2 - \frac{K}{H^2}Y^2 + \frac{H^{2n}}{nK}\left| \nabla\gamma \right|^2 \, , \\
	\label{EvK} \d_t K = {} & f'\Delta_{\M_t} K - f'\frac{n-1}{n}\frac{|\nabla K|^2}{K} - f'\frac{H^{2n}}{nK}\left| \nabla\gamma \right|^2 + f'\frac{K}{H^2}Y^2 + f''Kb^{jk}\nabla_jH\nabla_kH \\
	& + HK(f-Hf') + f'nK|A|^2 \, , \nonumber
	\end{align}
	\end{subequations}
	where $b^i_j$ denotes the components of the inverse of the Weingarten map so that $h^i_kb^k_j = \delta^i_j$ and $Y^2 = g^{ij}b^{lp}b^{mq}(H\nabla_i h_{lm} - h_{lm}\nabla_i H)(H\nabla_j h_{pq} - h_{pq}\nabla_j H)$.
\end{lemma}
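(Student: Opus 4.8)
The plan is to compute the four formulas by differentiating the determinant expression $K = \det[g^{ik}h_{kj}]$ and then using the evolution equations from Proposition~\ref{TimeChange}, the Codazzi equations, and the Simons-type identity hidden in $\Delta_{\M_t} h_{jk}$. The basic tool throughout is Jacobi's formula: for an invertible matrix $M(s)$ one has $\partial_s \det M = (\det M)\,\mathrm{tr}(M^{-1}\partial_s M)$. Writing the Weingarten map as $W^i_j = g^{ik}h_{kj}$, we have $K = \det W$, its inverse has components $b^i_j$ as defined, and $\mathrm{tr}(W^{-1}\partial W) = b^j_i\,\partial(g^{ik}h_{kj})$. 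Applying this with $\partial = \nabla_a$ immediately gives \eqref{NablaK}: since $\nabla_a g^{ik}=0$ we get $\nabla_a K = K\,b^j_i g^{ik}\nabla_a h_{kj} = K b^{jk}\nabla_a h_{jk}$.

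For \eqref{TimeK} I would apply Jacobi's formula with $\partial = \partial_t$. Here $\partial_t(g^{ik}h_{kj}) = (\partial_t g^{ik})h_{kj} + g^{ik}\partial_t h_{kj}$; substituting $\partial_t g^{ik} = 2f\,g^{ip}g^{kq}h_{pq}$ and the expression \eqref{TimeChange3} for $\partial_t h_{kj}$, contracting against $b^j_i$, and simplifying the contractions (using $b^j_i g^{ik}h_{kl} = \delta^j_l$ and $h_{ik}g^{kl}h_{lj}$ terms collapsing against $b$) should yield the bracket $H(f - Hf') + f'b^{jk}\Delta_{\M_t}h_{jk} + f'' b^{jk}\nabla_j H\nabla_k H + n|A|^2 f'$. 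The $H(f-Hf')$ piece comes from combining the $2f$-term from $\partial_t g^{ik}$ with the $-(f+Hf')h_{ik}g^{kl}h_{lj}$ term in $\partial_t h_{ij}$ after tracing, and the $n|A|^2 f'$ from the $f'|A|^2 h_{ij}$ term traced against $b^{ij}$ (giving $n$). This is mostly careful bookkeeping.

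The real work is \eqref{LapK}. The idea is to differentiate \eqref{NablaK} again: $\Delta_{\M_t} K = g^{ab}\nabla_a\nabla_b K = g^{ab}\nabla_a(K b^{jk}\nabla_b h_{jk})$. Expanding gives three groups: (i) $g^{ab}(\nabla_a K) b^{jk}\nabla_b h_{jk}$, which by \eqref{NablaK} is $\tfrac1K|\nabla K|^2$ — but one must be careful, I actually want to track this as $\tfrac1K\langle\nabla K,\nabla K\rangle$ and recombine; (ii) $K (\nabla_a b^{jk}) g^{ab}\nabla_b h_{jk}$, where $\nabla_a b^{jk} = -b^{jp}b^{qk}\nabla_a h_{pq}$ (from differentiating $b^j_i W^i_k = \delta^j_k$); (iii) $K b^{jk} g^{ab}\nabla_a\nabla_b h_{jk} = K b^{jk}\Delta_{\M_t}h_{jk}$, which is the term that survives explicitly. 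The Codazzi equations ($\nabla_a h_{jk}$ symmetric in all indices) are essential in (ii) and (iii) so that the rough Laplacian of $h$ appears. The crux is then to show that groups (i) and (ii) reorganize, via the definitions $Y^2 = g^{ij}b^{lp}b^{mq}(H\nabla_i h_{lm}-h_{lm}\nabla_i H)(H\nabla_j h_{pq}-h_{pq}\nabla_j H)$ and $\gamma = K/H^n$ (so $\nabla_a\gamma = H^{-n}\nabla_a K - nKH^{-n-1}\nabla_a H$, i.e.\ $H^{2n}|\nabla\gamma|^2 = |\nabla K|^2 - 2nKH^{-1}\langle\nabla K,\nabla H\rangle + n^2 K^2 H^{-2}|\nabla H|^2$), into the combination $\tfrac{n-1}{nK}|\nabla K|^2 - \tfrac{K}{H^2}Y^2 + \tfrac{H^{2n}}{nK}|\nabla\gamma|^2$. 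Concretely, one expands $Y^2 = H^2 b^{lp}b^{mq}g^{ij}\nabla_i h_{lm}\nabla_j h_{pq} - 2H h^{??}$-type cross terms $+ |A|^2$-type terms; using $b^{jk}\nabla_a h_{jk} = \tfrac1K\nabla_a K$ twice kills the leading $b b$ double-contraction down to $\tfrac{H^2}{K^2}|\nabla K|^2$, the cross term becomes $-2\tfrac{H}{K}\langle\nabla K,\nabla H\rangle$ after one contraction (using $h^{lm} = g^{la}g^{mb}h_{ab}$ and $b^{mq}h_{lm}$-type identities), and the last term is $g^{ij}b^{lp}b^{mq}h_{lm}h_{pq}\nabla_i H\nabla_j H = n|\nabla H|^2$ since $b^{lp}h_{lm}b^{mq}h_{pq} = b^{lp}(\delta^q_l \text{-ish}) = \mathrm{tr}(b\cdot\text{something}) $—more precisely $g^{ij}(b^{lp}h_{lm})(b^{mq}h_{pq})\nabla_iH\nabla_jH$, and $b^{lp}h_{lm}b^{mq}h_{pq}$ contracts to $n$. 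Matching all coefficients against the claimed identity — in particular checking the $|\nabla K|^2$ coefficient: from group (i) one gets $+\tfrac1K|\nabla K|^2$, and from the leading $Y^2$ piece $-\tfrac{K}{H^2}\cdot\tfrac{H^2}{K^2}|\nabla K|^2 = -\tfrac1K|\nabla K|^2$, while $\tfrac{n-1}{nK}|\nabla K|^2$ and the $\tfrac1{nK}|\nabla K|^2$ coming from $|\nabla\gamma|^2$ add to $\tfrac1K|\nabla K|^2$, consistent with group (i) after one redistributes — is the delicate accounting that makes the lemma work.

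Finally, \eqref{EvK} follows by pure substitution: take \eqref{TimeK}, replace the term $K f' b^{jk}\Delta_{\M_t}h_{jk}$ by solving \eqref{LapK} for $K b^{jk}\Delta_{\M_t}h_{jk} = \Delta_{\M_t}K - \tfrac{n-1}{nK}|\nabla K|^2 + \tfrac{K}{H^2}Y^2 - \tfrac{H^{2n}}{nK}|\nabla\gamma|^2$, multiply through by $f'$, and collect terms. No new ideas are needed here. I expect the main obstacle to be \eqref{LapK}: correctly expanding $\nabla b^{jk}$, applying Codazzi to symmetrize, and then recognizing that the messy gradient terms assemble exactly into $Y^2$ and $|\nabla\gamma|^2$ with the stated coefficients — a computation where sign errors and miscounted contractions are easy to make, so I would double-check it by specializing to the umbilic case (all $\kappa_i$ equal), where $\gamma \equiv n^{-n}$ forces $\nabla\gamma = 0$ and $Y = 0$, and independently by a two-dimensional direct computation.
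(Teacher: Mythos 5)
Your overall strategy matches the paper's: Jacobi's formula for \eqref{TimeK} and \eqref{NablaK}, a second covariant derivative of \eqref{NablaK} split into three groups for \eqref{LapK}, and substitution for \eqref{EvK}. The paper simply asserts the identity
\[
Kg^{ac}(\nabla_ab^{jk})(\nabla_ch_{jk}) = -\frac{1}{nK}|\nabla K|^2 - \frac{K}{H^2}Y^2 + \frac{H^{2n}}{nK}|\nabla\gamma|^2
\]
without proof, and your attempt to unpack it is the right instinct, but your bookkeeping of the leading $Y^2$ term is wrong. You claim that applying $b^{jk}\nabla_a h_{jk} = \frac{1}{K}\nabla_a K$ ``twice'' reduces $H^2 g^{ij}b^{lp}b^{mq}\nabla_i h_{lm}\nabla_j h_{pq}$ to $\frac{H^2}{K^2}|\nabla K|^2$. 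It does not: here each $b$ has one index contracted onto $\nabla_i h$ and the other onto $\nabla_j h$, which is a different pattern from the two self-contractions $(b^{lm}\nabla_i h_{lm})(b^{pq}\nabla_j h_{pq})$. Diagonalizing $h$ at a point makes the difference plain: the former becomes $H^2\sum_{i,l,m}\lambda_l^{-1}\lambda_m^{-1}(\nabla_i h_{lm})^2$, the latter $H^2\sum_i\bigl(\sum_l\lambda_l^{-1}\nabla_i h_{ll}\bigr)^2$, and these disagree as soon as the $\lambda_i$ differ or $\nabla h$ has off-diagonal entries. With your simplification in place, your own ``accounting'' paragraph leaves your group (ii) term unmatched on the other side, and the identity would not close.

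The actual structure is cleaner than you suggest and requires no simplification of that term. From $\nabla_a b^{jk} = -b^{jp}b^{qk}\nabla_a h_{pq}$ (which you have correctly), group (ii) is $-K g^{ac}b^{jp}b^{qk}\nabla_a h_{pq}\nabla_c h_{jk}$, and the leading term of $-\tfrac{K}{H^2}Y^2$ is $-K g^{ij}b^{lp}b^{mq}\nabla_i h_{lm}\nabla_j h_{pq}$; these are the same expression after relabeling dummy indices, so they cancel identically. Your cross term $-\tfrac{2H}{K}\inprod{\nabla K}{\nabla H}$ and last term $n|\nabla H|^2$ are correct, and expanding
\[
\frac{H^{2n}}{nK}|\nabla\gamma|^2 = \frac{1}{nK}|\nabla K|^2 - \frac{2}{H}\inprod{\nabla K}{\nabla H} + \frac{nK}{H^2}|\nabla H|^2
\]
shows that the cross and $|\nabla H|^2$ contributions cancel those from $-\tfrac{K}{H^2}Y^2$, leaving $\tfrac{n-1}{nK}|\nabla K|^2 + \tfrac{1}{nK}|\nabla K|^2 = \tfrac{1}{K}|\nabla K|^2$, which is exactly group (i). With that correction your argument is sound and coincides with the paper's.
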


The previous lemma agrees with Lemma 2.1 in \cite{Schulze2}.

\begin{proof}
	Given that $K = \text{det}[h^i_j] = \det[g^{ik}h_{kj}]$, we make use of the following identity: If $A(t)$ is a positive definite matrix then \[
	\d_t\det[A(t)] = \det[A(t)]\text{tr}[A^{-1}(t)\d_tA(t)] \, .
	\]
	The first two equations follow from this and substituting in the evolution equation for $h_{ij}$ \eqref{TimeChange3}.
	
	For the third, taking the covariant derivative of \eqref{NablaK} gives \[
	\Delta_{\M_t}K = g^{ac}\nabla_a \nabla_c K = g^{ac}(\nabla_aK)b^{jk}\nabla_ch_{jk} + Kg^{ac}(\nabla_ab^{jk})(\nabla_ch_{jk}) + Kb^{jk}\Delta_{\M_t}h_{jk} \, ,
	\] and now we use $b^{jk}\nabla_ch_{jk} = \frac{1}{K}\nabla_c K$ and the identity \[
	Kg^{ac}(\nabla_ab^{jk})(\nabla_ch_{jk}) = -\frac{1}{nK}|\nabla K|^2 - \frac{K}{H^2}Y^2 + \frac{H^{2n}}{nK}|\nabla\gamma|^2 \, .
	\] Finally, substitute $Kb^{jk}\Delta_{\M_t}h_{jk}$ from \eqref{LapK} into \eqref{TimeK} to obtain \eqref{EvK}.
\end{proof}

\begin{lemma}\label{Evol-u-tion}
	We have the following equations for $\gamma$:
	\begin{subequations}
	\begin{align}
		\label{Evol-u-tion1} \nabla_a\gamma = {} & \frac{\nabla_a K}{H^n} - \frac{nK\nabla_a H}{H^{n+1}} \, , \\
		\label{Evol-u-tion2} \Delta_{\M_t}\gamma = {} & \frac{\Delta_{\M_t}K}{H^n} - \frac{2n}{H^{n+1}}\inprod{\nabla K}{\nabla H} - \frac{n\gamma}{H}\Delta_{\M_t}H + \frac{n(n+1)\gamma}{H^2}|\nabla H|^2 \, , \\
		\d_t\gamma = {} & f'\Delta_{\M_t}\gamma + f'\frac{n+1}{nH^n}\inprod{\nabla\gamma}{\nabla H^n} - f'\frac{n-1}{nK}\inprod{\nabla\gamma}{\nabla K} - f'\frac{H^n}{nK}|\nabla\gamma|^2 \label{Evol-u-tion3} \\
		& {} + \frac{Hf' - f}{H}(n|A|^2 - H^2)\gamma + f'\frac{Y^2}{H^2}\gamma + f''\left[ \left( b^{ij} - \frac{n}{H}g^{ij} \right)\nabla_i H\nabla_j H \right]\gamma \, .\nonumber
	\end{align}
	\end{subequations}
\end{lemma}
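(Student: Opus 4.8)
The plan is to obtain all three identities by direct computation, using the previous lemma and Proposition \ref{TimeChange}. Identity \eqref{Evol-u-tion1} is nothing but the product rule applied to $\gamma = KH^{-n}$: one has $\nabla_a\gamma = H^{-n}\nabla_a K + K\nabla_a(H^{-n}) = H^{-n}\nabla_a K - nKH^{-n-1}\nabla_a H$, which is the stated formula. For \eqref{Evol-u-tion2} I would take $g^{ac}\nabla_c$ of \eqref{Evol-u-tion1}; expanding $g^{ac}\nabla_c(H^{-n}\nabla_a K)$ gives $H^{-n}\Delta_{\M_t}K - nH^{-n-1}\inprod{\nabla H}{\nabla K}$, expanding $n\,g^{ac}\nabla_c(KH^{-n-1}\nabla_a H)$ gives $nH^{-n-1}\inprod{\nabla K}{\nabla H} + nKH^{-n-1}\Delta_{\M_t}H - n(n+1)KH^{-n-2}|\nabla H|^2$, and subtracting while writing $KH^{-n} = \gamma$ collapses the two cross terms into $-2nH^{-n-1}\inprod{\nabla K}{\nabla H}$, yielding \eqref{Evol-u-tion2}.

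The substantive step is \eqref{Evol-u-tion3}. I would begin from $\d_t\gamma = H^{-n}\d_t K - nKH^{-n-1}\d_t H$ and substitute \eqref{EvK} for $\d_t K$ and \eqref{TimeChange4} for $\d_t H$. The key manoeuvre is to rewrite the resulting second-order term $H^{-n}f'\Delta_{\M_t}K$ by means of \eqref{Evol-u-tion2}: this turns it into $f'\Delta_{\M_t}\gamma$ plus several lower-order terms, and in particular produces a term $f'\frac{n\gamma}{H}\Delta_{\M_t}H$ which cancels exactly against the $-f'\frac{n\gamma}{H}\Delta_{\M_t}H$ coming from the $\d_t H$ piece. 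After this cancellation $f'\Delta_{\M_t}\gamma$ is the only second-order term, as required.

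It then remains to collect what is left by type. The $|A|^2$ contributions $nf'\gamma|A|^2$ and $-nf\gamma|A|^2/H$ combine with the zeroth-order term $H\gamma(f - Hf')$ to give $\frac{Hf'-f}{H}(n|A|^2 - H^2)\gamma$; the two $f''$ terms $f''\gamma\,b^{jk}\nabla_jH\nabla_kH$ and $-\frac{n}{H}f''\gamma|\nabla H|^2$ combine, on writing $|\nabla H|^2 = g^{ij}\nabla_iH\nabla_jH$, into $f''\big[(b^{ij} - \tfrac{n}{H}g^{ij})\nabla_iH\nabla_jH\big]\gamma$; and the terms $f'\frac{Y^2}{H^2}\gamma$ and $-f'\frac{H^n}{nK}|\nabla\gamma|^2$ already appear in final form. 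The only part needing a short explicit calculation is that the residual gradient terms, namely $f'\frac{2n}{H^{n+1}}\inprod{\nabla K}{\nabla H} - f'\frac{n(n+1)\gamma}{H^2}|\nabla H|^2 - f'\frac{n-1}{n}\frac{|\nabla K|^2}{KH^n}$, equal the claimed combination $f'\frac{n+1}{nH^n}\inprod{\nabla\gamma}{\nabla H^n} - f'\frac{n-1}{nK}\inprod{\nabla\gamma}{\nabla K}$; I would verify this by expanding the latter using $\nabla H^n = nH^{n-1}\nabla H$ together with \eqref{Evol-u-tion1}, which turns the first piece into $f'\frac{n+1}{H^{n+1}}\inprod{\nabla K}{\nabla H} - f'\frac{n(n+1)\gamma}{H^2}|\nabla H|^2$ and the second into $-f'\frac{n-1}{nKH^n}|\nabla K|^2 + f'\frac{n-1}{H^{n+1}}\inprod{\nabla K}{\nabla H}$, whereupon the sum matches term by term.

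I do not expect any genuine obstacle here beyond careful bookkeeping: the computation is lengthy but every step is forced. The one point to watch is that the coefficient of $\inprod{\nabla K}{\nabla H}$ comes out to $2n = (n+1) + (n-1)$; this is precisely the consistency check confirming that the $\Delta_{\M_t}K \to \Delta_{\M_t}\gamma$ substitution and the $\d_t H$ substitution have been combined correctly, and it is also the place where an error in the earlier identity \eqref{LapK} (and hence in \eqref{EvK}) would show up. Once this is confirmed, assembling the collected pieces gives \eqref{Evol-u-tion3} exactly as stated.
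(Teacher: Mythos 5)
Your proposal is correct and follows essentially the same route as the paper: equations \eqref{Evol-u-tion1} and \eqref{Evol-u-tion2} are obtained by the product rule, and \eqref{Evol-u-tion3} is derived by substituting \eqref{EvK} and \eqref{TimeChange4} into $\d_t\gamma$, converting the $\Delta_{\M_t}K$ term into $\Delta_{\M_t}\gamma$ via \eqref{Evol-u-tion2}, and then regrouping. The only difference is bookkeeping: the paper routes through $\Delta_{\M_t}H^n$ and $\d_t H^n$ together with two intermediate gradient identities, while you work with $\Delta_{\M_t}H$ and $\d_t H$ directly and verify the residual gradient identity (that the leftover $\inprod{\nabla K}{\nabla H}$, $|\nabla H|^2$ and $|\nabla K|^2$ terms combine into $\frac{n+1}{nH^n}\inprod{\nabla\gamma}{\nabla H^n} - \frac{n-1}{nK}\inprod{\nabla\gamma}{\nabla K}$) by expansion at the end — this is exactly the same content as the paper's two intermediate identities, so the two presentations are equivalent.
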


\begin{proof}
	The first two equations follow by directly differentiating $\gamma$. Using \eqref{Evol-u-tion1} we derive the identities
	\begin{subequations}
		\begin{align}
		\frac{nK}{H^{n+1}}|\nabla H|^2 {} & = \frac{\inprod{\nabla K}{\nabla H}}{H^n} - \frac{\inprod{\nabla \gamma}{\nabla H^n}}{nH^{n-1}} \\
		\inprod{\nabla K}{\nabla H} {} & = \frac{H^{n+1}}{nK}\left( \frac{|\nabla K|^2}{H^n} - \inprod{\nabla\gamma}{\nabla K} \right) \, .
		\end{align}
	\end{subequations}
	Applying these and $\Delta_{\M_t}H^n = nH^{n-1}\Delta_{\M_t}H + n(n-1)H^{n-2}|\nabla H|^2$ to \eqref{Evol-u-tion2} gives \[
	\Delta_{\M_t}\gamma = \frac{\Delta_{\M_t} K}{H^n} - \frac{\gamma\Delta_{\M_t} H^n}{H^n} - \frac{n+1}{nH^n} \inprod{\nabla\gamma}{\nabla H^n} +\frac{n-1}{nK}\inprod{\nabla\gamma}{\nabla K} + \frac{n(n-1)\gamma}{H^2}|\nabla H|^2 - \frac{n-1}{nK}\frac{|\nabla K|^2}{H^n} \, .
	\] We also have \[
	\d_t H^n = f'\Delta_{\M_t}H^n + nH^{n-1}|A|^2f + nH^{n-2}\left( Hf'' - (n-1)f' \right)|\nabla H|^2 \, .
	\] Now \eqref{Evol-u-tion3} follows by substituting the above and \eqref{EvK} into \[
	\d_t\gamma = \frac{\d_t K}{H^n} - \frac{K}{H^n}\d_t H^n \, . \qedhere
	\]
	
\end{proof}

\begin{proposition}\label{PinchingProposition}
	There exists a constant $0 < C(n,\alpha) < 1/n^n$ such that if the initial hypersurface $\M_0$ is pinched in the sense that
	\begin{equation}\label{PinchingEquation}
	\frac{K(p)}{H(p)^n} \geq C(n,\alpha) \quad \forall p \in \M_0 \, ,
	\end{equation}
	then the pinching \eqref{PinchingEquation} is preserved by the flow.
\end{proposition}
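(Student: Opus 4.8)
The plan is to run the parabolic maximum principle on $\gamma=K/H^n$ through its evolution equation \eqref{Evol-u-tion3}. Write $\gamma_{\min}(t)=\min_{\M_t}\gamma$. At a point realising this spatial minimum one has $\nabla\gamma=0$ and $\Delta_{\M_t}\gamma\ge0$, so in \eqref{Evol-u-tion3} the term $f'\Delta_{\M_t}\gamma$ is nonnegative, the three terms in which $\nabla\gamma$ appears (in an inner product, or through $|\nabla\gamma|^2$) vanish, and therefore at that point
\[
\d_t\gamma\ \ge\ R\,\gamma,\qquad R:=\frac{Hf'-f}{H}\bigl(n|A|^2-H^2\bigr)+f'\frac{Y^2}{H^2}+f''\Bigl(b^{ij}-\frac{n}{H}g^{ij}\Bigr)\nabla_iH\,\nabla_jH .
\]
So everything reduces to showing $R\ge0$ wherever $\gamma\ge C$: once that is known, Hamilton's trick (the usual ODE comparison for $\gamma_{\min}$) gives $\tfrac{d}{dt}\gamma_{\min}\ge0$ whenever $\gamma_{\min}\ge C$, and as $\gamma_{\min}(0)\ge C$ the pinching is preserved for all $t$. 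Strict convexity, hence the definedness of $\gamma$ and of the inverse Weingarten map $b^i_j$, survives along the argument by a routine continuity bootstrap, since $\gamma\ge C>0$ keeps $K$ positive.

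In $R$, the first term is nonnegative because $Hf'-f>0$ and $n|A|^2-H^2=\sum_{i<j}(\kappa_i-\kappa_j)^2\ge0$, and the second because $f'>0$ and $Y^2\ge0$ (a sum of squares, $g$ and $b$ being positive definite). The only term with an unfavourable sign is $f''\bigl(b^{ij}-\tfrac{n}{H}g^{ij}\bigr)\nabla_iH\nabla_jH$. Multiplying $R$ through by $H^2/f'>0$ and putting $\beta:=Hf''/f'$, it is enough to prove
\[
Y^2+\beta H\Bigl(b^{ij}-\frac{n}{H}g^{ij}\Bigr)\nabla_iH\,\nabla_jH\ \ge\ 0 ;
\]
this is immediate where the second summand is nonnegative, and where it is negative the left-hand side is decreasing in $\beta$, so by the structural bound $\beta\le2\alpha$ it suffices to treat $\beta=2\alpha$.

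The heart of the matter is then a pointwise algebraic fact: there is $\omega_0=\omega_0(n,\alpha)>0$ such that, at any point where the principal curvatures satisfy $\max_{i,j}|\kappa_i/\kappa_j-1|\le\omega_0$,
\[
Y^2+2\alpha H\,\Phi\ \ge\ 0,\qquad \Phi:=\Bigl(b^{ij}-\frac{n}{H}g^{ij}\Bigr)\nabla_iH\,\nabla_jH .
\]
I would prove this by continuity and compactness. At a point, $Y^2$ and $\Phi$ are functions of $(g,h)$ that are quadratic in the totally symmetric Codazzi tensor $S=\nabla h$, and $Y^2+2\alpha H\Phi$ is rotationally invariant, invariant under $h\mapsto\lambda h$ (holding $g$ and $S$ fixed), and homogeneous of degree two in $S$; so after normalising $g=\delta$, $\mathrm{tr}\,h=1$ and $|S|=1$ we are on the compact set $\{\,h>0,\ \mathrm{tr}\,h=1,\ \det h\ge C\,\}\times\{\,|S|=1\,\}$, on which $Y^2+2\alpha H\Phi$ is continuous. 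At the round configuration $h=\tfrac1n g$ one has $\Phi=0$ and $Y^2=n^2|\nabla h^{\circ}|^2$ with $h^{\circ}:=h-\tfrac{H}{n}g$, and $\nabla h^{\circ}=0$ forces $S=0$: the relation $S_{ijk}=\tfrac1n g_{jk}g^{ab}S_{iab}$ together with the Codazzi symmetry $S_{ijk}=S_{jik}$ gives, on contracting with $g^{jk}$ and using $n\ge2$, that $g^{ab}S_{iab}=0$, whence $S\equiv0$. Thus $Y^2+2\alpha H\Phi=n^2|\nabla h^{\circ}|^2>0$ on $\{h=\tfrac1n g\}\times\{|S|=1\}$, so by compactness it remains nonnegative on a neighbourhood of that set, which is exactly the claim.

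To finish, the rigidity case of the arithmetic--geometric mean inequality shows that $\gamma=K/H^n\ge C$ controls the spread of the principal curvatures, $\max_{i,j}|\kappa_i/\kappa_j-1|\le\rho(C)$ with $\rho(C)\to0$ as $C\to1/n^n$; choosing $C=C(n,\alpha)<1/n^n$ close enough to $1/n^n$ that $\rho(C)\le\omega_0(n,\alpha)$ makes the third paragraph applicable wherever $\gamma\ge C$, giving $R\ge0$ there. I expect the third paragraph to be the main obstacle: both $Y^2$ and the sign-indefinite gradient term vanish at round configurations, so one has to show that $Y^2$ is genuinely \emph{positive definite} (not just semidefinite) there in order to absorb a small multiple of the bad term, and this relies entirely on the total symmetry of $\nabla h$ (the Codazzi equations). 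The particular flow enters only through $Hf''/f'\le2\alpha$, which fixes how strong a pinching is required. A more computational alternative, in the spirit of Schulze, is to diagonalise the Weingarten map and verify directly that the resulting quadratic form in the $\nabla_ih_{jk}$ is nonnegative; in the model case $n=2$ it reduces to the requirement $\kappa_{\min}/\kappa_{\max}\ge\alpha/(1+\alpha)$, which the pinching supplies.
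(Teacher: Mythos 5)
Your proof is correct, and its skeleton matches the paper's: reduce to a zero-order estimate at the spatial minimum of $\gamma$, note that the $(Hf'-f)$-term and the $Y^2$-term have favourable sign, isolate the gradient term with unfavourable sign, and absorb it using $Hf''/f'\le2\alpha$. The difference is in how the key algebraic inequality is established, and there your route is genuinely different from the paper's. The paper proves an explicit quantitative pinching lemma (Lemma \ref{ExistenceOfEpsilon}): for $C$ close enough to $1/n^n$ one gets an $\varepsilon$ with $h_{ij}\ge\varepsilon H g_{ij}$ \emph{and} $\left|b^{ij}-\tfrac{n}{H}g^{ij}\right|\le\tfrac{\varepsilon^2}{8\alpha H}$, and then invokes Huisken's Lemma 2.3(ii) to get $Y^2\ge\tfrac12\varepsilon^2|\nabla H|^2$, so that
$\tfrac{Y^2}{H^2}+\tfrac{f''}{f'}\Phi\ge\left(\tfrac{\varepsilon^2}{2}-2\alpha\cdot\tfrac{\varepsilon^2}{8\alpha}\right)\tfrac{|\nabla H|^2}{H^2}=\tfrac{\varepsilon^2}{4}\tfrac{|\nabla H|^2}{H^2}\ge0$.
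You instead normalise by the scale invariances of $Y^2+2\alpha H\Phi$ (in $h$ and in $S=\nabla h$), reduce to the compact set $\{h>0,\ \mathrm{tr}\,h=1,\ \det h\ge C\}\times\{|S|=1\}$, verify strict positivity at the round configuration via the Codazzi rigidity $S_{ijk}=\tfrac1n g_{jk}g^{ab}S_{iab}\Rightarrow S=0$, and invoke continuity and the shrinking of $\{\det h\ge C\}$ to $\{h=\tfrac1n g\}$ as $C\to1/n^n$. Both arguments are sound. What the paper's version buys is an \emph{explicit} $\varepsilon$ and the sharper pointwise bound $Y^2+2\alpha H\Phi\ge\tfrac{\varepsilon^2}{4}|\nabla H|^2$ rather than mere nonnegativity; these are then reused in the proof of Theorem \ref{PinchingResult} to choose the parameter $\sigma$. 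If you carried your compactness argument forward, you would need an analogous quantitative lower bound there, which a purely qualitative continuity statement does not supply directly. Your approach has the virtue of making transparent that the only input from the specific flow is $Hf''/f'\le2\alpha$, and that the positivity at the round configuration is entirely a Codazzi phenomenon -- a point the paper's computation leaves implicit.
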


Before proving Proposition \ref{PinchingProposition}, we need the following lemma:
\begin{lemma}\label{ExistenceOfEpsilon}
	There exists a constant $0 < C < 1/n^n$ such that if a convex hypersurface $\mathcal{N}$ satisfies \eqref{PinchingEquation} with this constant $C$, then there exists $0 < \varepsilon = \varepsilon(C) \leq 1/n$ such that
	\begin{align}
	\label{FirstEpsilon} h_{ij} \geq \varepsilon Hg_{ij} \\
	\intertext{and}
	\label{SecondEpsilon} \left| b^{ij} - \frac{n}{H}g^{ij} \right| \leq \frac{\varepsilon^2}{8\alpha H}
	\end{align} on $\mathcal{N}$. Moreover, $\varepsilon \longrightarrow 1/n$ as $C \longrightarrow 1/n^n$.
\end{lemma}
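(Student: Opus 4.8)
The plan is to reduce the two tensor inequalities to elementary pointwise bounds on the principal curvatures and then run a short compactness argument on the standard simplex. At a point $p \in \mathcal{N}$ let $\kappa_1(p) \le \cdots \le \kappa_n(p)$ be the principal curvatures and put $\lambda_i(p) = \kappa_i(p)/H(p) \in [0,1]$, so that $\sum_i \lambda_i(p) = 1$. Since the Weingarten map diagonalises against $g_{ij}$, the inequality \eqref{FirstEpsilon} is equivalent to $\lambda_i(p) \ge \varepsilon$ for every $i$ and every $p$; the endomorphism $b^i_j - (n/H)\delta^i_j$ has eigenvalues $H^{-1}\big(\lambda_i^{-1} - n\big)$, so \eqref{SecondEpsilon} (with $|\cdot|$ the norm induced by $g$) holds as soon as $\max_i |\lambda_i(p)^{-1} - n| \le \varepsilon^2/(8\alpha\sqrt n)$ for every $p$; and the pinching hypothesis \eqref{PinchingEquation} says exactly that $\prod_i \lambda_i(p) \ge C$ for all $p$.

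It therefore suffices to establish a finite-dimensional fact. Write $\Sigma = \{\lambda \in \R^n : \lambda_i \ge 0,\ \sum_i \lambda_i = 1\}$, $P(\lambda) = \prod_i \lambda_i$, $\lambda^\ast = (1/n,\dots,1/n)$, and $\Sigma_C = \{\lambda \in \Sigma : P(\lambda) \ge C\}$. Each $\Sigma_C$ is compact, nonempty for $0 < C \le 1/n^n$, nested decreasing as $C \uparrow 1/n^n$, and $\bigcap_{C < 1/n^n}\Sigma_C = \{\lambda^\ast\}$; equivalently $\delta(C) := \max\{|\lambda - \lambda^\ast| : \lambda \in \Sigma_C\} \to 0$ as $C \uparrow 1/n^n$. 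Only the last point needs an argument: if $\lambda^{(k)} \in \Sigma_{C_k}$ with $C_k \uparrow 1/n^n$, then by compactness of $\Sigma$ a subsequence converges to some $\bar\lambda \in \Sigma$ with $P(\bar\lambda) = 1/n^n$, and by the arithmetic–geometric mean inequality $P$ attains its maximum on $\Sigma$ only at $\lambda^\ast$, so $\bar\lambda = \lambda^\ast$.

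Now set $\varepsilon(C) := \min\{\lambda_i : \lambda \in \Sigma_C,\ 1 \le i \le n\}$. This is positive (since $C > 0$ forces every $\lambda_i > 0$ on $\Sigma_C$, and the minimum is attained by compactness), satisfies $\varepsilon(C) \le 1/n$ always (the smallest coordinate of a point of $\Sigma$ is at most $1/n$), and $\varepsilon(C) \to 1/n$ as $C \uparrow 1/n^n$ because $\Sigma_C$ collapses to $\lambda^\ast$. By construction, \eqref{PinchingEquation} with constant $C$ forces $\lambda_i(p) \ge \varepsilon(C)$ everywhere, which is \eqref{FirstEpsilon}. For \eqref{SecondEpsilon}, set $\omega(C) := \max\{\max_i |\lambda_i^{-1} - n| : \lambda \in \Sigma_C\}$; since $\varepsilon(C) > 0$ the functions $\lambda \mapsto \lambda_i^{-1}$ are continuous on $\Sigma_C$, and as $\Sigma_C$ collapses to $\lambda^\ast$ we get $\omega(C) \to 0$. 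Meanwhile $\varepsilon(C)^2/(8\alpha\sqrt n) \to 1/(8\alpha\sqrt n\, n^2) > 0$, so there is $C_0 < 1/n^n$ with $\omega(C_0) \le \varepsilon(C_0)^2/(8\alpha\sqrt n)$; any $C \in [C_0, 1/n^n)$ then yields both \eqref{FirstEpsilon} and \eqref{SecondEpsilon} with $\varepsilon = \varepsilon(C)$, and $\varepsilon(C) \to 1/n$ as $C \uparrow 1/n^n$.

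The argument has essentially no hard step; the only thing to be careful about is the bookkeeping of the constant, namely that the lower bound $\varepsilon(C)$ forced by \eqref{FirstEpsilon} is simultaneously large enough that $\varepsilon(C)^2/(8\alpha)$ dominates the vanishing deviation $\omega(C)$ governing \eqref{SecondEpsilon}. This is exactly why one proves the limiting statement $\varepsilon(C) \to 1/n$ first: $\varepsilon(C)$ stays bounded below by a fixed positive number while $\omega(C) \to 0$, so the two demands are compatible for $C$ close enough to $1/n^n$. A minor secondary point is the normalisation of the tensor norm $|\cdot|$ in \eqref{SecondEpsilon} — operator norm versus the $g$-Frobenius norm differ by at most a factor $\sqrt n$, which is harmless and has been absorbed above.
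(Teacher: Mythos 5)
Your proof is correct, but it takes a genuinely different route from the paper. After the same initial reduction — diagonalise the Weingarten map, write $\lambda_i = \kappa_i/H$ on the simplex $\Sigma$, note that \eqref{FirstEpsilon} is $\min_i\lambda_i\ge\varepsilon$ and \eqref{SecondEpsilon} follows from a pointwise bound on $\max_i|\lambda_i^{-1}-n|$ — the paper proceeds by explicit elementary estimates: it sets $\varepsilon = 1/n - \delta$, splits into the cases $x_k < 1/n$ and $x_k > 1/n$, bounds $x_k$ above via $\varepsilon^{n-1}x_k \le K/H^n \le 1/n^n$ and the binomial theorem, and verifies the required inequality under explicit smallness conditions of the form $\delta + \sqrt[3]{8\alpha n^2\delta}\le 1/n$. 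You instead run a clean compactness argument on $\Sigma_C = \{\lambda\in\Sigma : \prod\lambda_i\ge C\}$, using AM--GM only to identify $\lambda^\ast$ as the unique maximiser of $P$ and hence to show $\Sigma_C$ collapses to $\lambda^\ast$, so that $\varepsilon(C)\to 1/n$ and $\omega(C)\to 0$ simultaneously. The compactness approach is shorter and conceptually tidier — the crucial structural point, that the forced lower bound $\varepsilon(C)$ stays bounded away from zero while the deviation $\omega(C)$ vanishes, is isolated cleanly — but it is non-constructive: it furnishes no explicit threshold $C_0$, whereas the paper's calculation could in principle be unwound to give one. Your handling of the tensor-norm normalisation (inserting the extra factor $\sqrt n$ to cover both the operator and $g$-Frobenius interpretations of $|\cdot|$) is a sound precaution; the paper tacitly uses the operator-norm reduction, so your sufficient condition is slightly stronger, which is harmless.
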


\begin{proof}
	Suppose $\mathcal{N}$ satisfies \eqref{PinchingEquation}, and let $\lambda_1,...,\lambda_n$ be the principal curvatures of $\mathcal{N}$ with the ordering $0 < \lambda_1 \leq ... \leq \lambda_n$. Define the quantities \[
	x_k = \frac{\lambda_k}{\sum_{l=1}^{n} \lambda_l} \, .
	\] Note that we also have $0 < x_1 \leq ... \leq x_n \leq 1$. To prove \eqref{FirstEpsilon}, we must first show that $x_1$ is bounded below. This is the case, since $\sum \lambda_l \geq \lambda_k$ for all $k$, and so \[
	0 < C \leq \frac{K}{H^n} = \prod_{l=1}^n x_l = \frac{\lambda_1...\lambda_n}{(\sum_{l=1}^n \lambda_l)^n} \leq \frac{\lambda_1...\lambda_n}{(\sum_{l=1}^n \lambda_l)\lambda_2...\lambda_n} = x_1 \, .
	\] Now take $\varepsilon$ to be the maximal constant such that $x_1 \geq \varepsilon$, which is equivalent to \eqref{FirstEpsilon}. The fact $\varepsilon \longrightarrow 1/n$ as $C \longrightarrow 1/n^n$ comes from the arithmetic-geometric mean inequality: as $C \longrightarrow 1/n^n$, all the principal curvatures tend to being equal, so $x_1 \longrightarrow 1/n$, and therefore the maximal $\varepsilon$ must itself tend to $1/n$. \\
	Now consider \eqref{SecondEpsilon}. By writing the inequality in terms of the principal curvatures, we find that it is equivalent to show \[
	\left| \frac{1}{n} - x_k \right| \leq \frac{\varepsilon^2}{8\alpha n}x_k \quad \forall k \, .
	\] To prove this, set $\varepsilon = 1/n - \delta$. We deal with the cases $x_k < 1/n$ and $x_k > 1/n$ separately (if $x_k = 1/n$ then the statement holds immediately). First assume that $x_k < 1/n$. By choosing $C$ close enough to $1/n^n$, we can make $\delta$ small enough so that $\delta + \sqrt[3]{8\alpha n\delta} \leq 1/n$. We therefore have \[
	\frac{1}{n} - \delta \leq x_k \implies \left| \frac{1}{n} - x_k \right| \leq \delta \leq \frac{1}{8\alpha n}\left( \frac{1}{n} - \delta \right)^3 = \frac{\varepsilon^2}{8\alpha n}\left( \frac{1}{n} - \delta \right) \leq \frac{\varepsilon^2}{8\alpha n}x_k \, ,
	\] as required. Secondly assume that $x_k > 1/n$. We first need to bound $x_k$ above by something depending on $\delta$. Since $\varepsilon$ is a lower bound for all the $x_l$, \[
	\varepsilon^{n-1} x_k \leq \prod_{l=1}^{n} x_l = \frac{K}{H^n} \leq \frac{1}{n^n} \implies x_k \leq \frac{1}{n^n\varepsilon^{n-1}} = \frac{1}{n(1-n\delta)^{n-1}} \, .
	\] By the binomial theorem it is possible to choose $C$ large enough (or equivalently, $\delta$ small enough) that \[
	x_k \leq \frac{1}{n(1-n\delta)^{n-1}} \leq \frac{1}{n} + n\delta \, .
	\] Thus we have \[
	\left| \frac{1}{n} - x_k \right| \leq n\delta \leq \frac{1}{8\alpha n} \left( \frac{1}{n} - \delta \right)^3 \leq \frac{\varepsilon^2}{8\alpha n} x_k \, ,
	\] provided $\delta + \sqrt[3]{8\alpha n^2\delta} \leq 1/n$. Thus, we have shown that for $C$ close enough to $1/n^n$, \eqref{SecondEpsilon} holds.
\end{proof}

We are now ready to prove Proposition \ref{PinchingProposition}.

\begin{proof}[Proof of Proposition \ref{PinchingProposition}]
	We use the parabolic maximum principle on the evolution equation \eqref{Evol-u-tion3} in Lemma \ref{Evol-u-tion}. Since $Hf' - f \geq 0$ by \eqref{Hf'-f}, and the identity $H^2 \leq n|A|^2$ holds for any hypersurface by the Cauchy-Schwartz inequality, we automatically have \[
	\frac{Hf' - f}{H}(n|A|^2 - H^2) \geq 0 \, .
	\] We now show that for the right choice of initial surface, \[
	\frac{Y^2}{H^2} + \frac{f''}{f'}\left( b^{ij} - \frac{n}{H}g^{ij} \right)\nabla_i H\nabla_j H \geq 0 \, .
	\] As in Corollary 4.3 of \cite{Chow} and Lemma 2.2 of \cite{Schulze2} we take the number $C(n,\alpha)$ to be the minimal constant such that $0 \leq C(n,\alpha) < 1/n^n$ and such that there exists $\varepsilon>0$ satisfying \[
	h_{ij} \geq \varepsilon Hg_{ij} \text{ and } \left| b^{ij} - \frac{n}{H}g^{ij} \right| \leq \frac{\varepsilon^2}{8\alpha H} \, .
	\] We can do this by Lemma \ref{ExistenceOfEpsilon}. Since on convex surfaces $b^{ij} \geq g^{ij}/H$ always holds, due to $|A|^2 \leq H^2$ and the fact that $h_{ij}$ is positive definite, we can also estimate \[
	Y^2 \geq \frac{|H\nabla_i h_{jl} - h_{jl}\nabla_i H|^2}{H^2} \geq \frac{1}{2}\varepsilon^2|\nabla H|^2
	\] by Lemma 2.3(ii) in \cite{Huisken1984}. Hence we have \[
	\frac{Y^2}{H^2} + \frac{f''}{f'}\left[ \left( b^{ij} - \frac{n}{H}g^{ij} \right)\nabla_i H\nabla_j H \right] \geq \left[ \frac{\varepsilon^2}{2} - 2\alpha\frac{\varepsilon^2}{8\alpha} \right]\frac{|\nabla H|^2}{H^2} \geq \frac{\varepsilon^2}{4}\frac{|\nabla H|^2}{H^2} > 0 \, .
	\] The desired result therefore follows from the maximum principle because the coefficient of $\gamma$ in \eqref{Evol-u-tion3} is positive.
\end{proof}

Following \cite{Schulze2}, we now want to show that at points where the mean curvature is large, the ratios between the principal curvatures approaches 1. To do this,  define
\begin{equation} \label{Definingg}
g := \frac{1}{n^n} - \frac{K}{H^n} \, \text{ and } \,  g_\sigma = g\phi(H) \, ,
\end{equation}
where $\phi(H)$ is a function to be chosen later with dependence on a small parameter $\sigma > 0$. Note that $0 \leq g \leq 1/n^n$ and $g(p,t) = 0$ if and only if the principal curvatures at $(p,t)$ are all equal.

\begin{lemma}\label{Evolutiong}
	The function $g_\sigma$ has satisfies the evolution equation
	\begin{align*}
	\frac{\d g_\sigma}{\d t} \, = {} & f'\Delta g_\sigma + \frac{f'}{\phi}\frac{H^n}{K}|\nabla g_\sigma|^2 + 2f' \left[ 1 - \frac{H\phi'}{\phi}\left( 1 + \frac{H^n}{K}g \right) \right]\inprod{\nabla g_\sigma}{\frac{\nabla H}{H}} \\
	& + f'\frac{H\phi'}{\phi}\left[ \frac{Hf''}{f'} - \frac{H\phi''}{\phi'} - 2\left( 1 - \frac{H\phi'}{\phi} \right) + \frac{H^n}{K}\frac{H\phi'}{\phi} \right]\frac{|\nabla H|^2}{H^2}g_\sigma \\
	& - \phi\gamma \left[ f'\frac{Y^2}{H^2} + f''\left( b^{ij} - \frac{n}{H}g^{ij} \right)\nabla_i H \nabla_j H \right] - \phi\gamma \frac{Hf' - f}{H}(n|A|^2 - H^2) + fg\phi'|A|^2 \, .
	\end{align*}
\end{lemma}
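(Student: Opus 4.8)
The plan is to derive this directly from the evolution equation \eqref{Evol-u-tion3} for $\gamma$ together with \eqref{TimeChange4} for $H$; the computation uses nothing beyond strict convexity and $H>0$, which keep $K$, $H$ and $\gamma$ positive so that every division below makes sense.

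First I would convert \eqref{Evol-u-tion3} into an evolution equation for $g = \tfrac{1}{n^n} - \gamma$ alone. Since $\partial_t g = -\partial_t\gamma$, $\nabla g = -\nabla\gamma$ and $\Delta g = -\Delta\gamma$, and using $\nabla H^n = nH^{n-1}\nabla H$, $K = \gamma H^n$ and $\nabla K = H^n\nabla\gamma + n\gamma H^{n-1}\nabla H$, the three gradient terms in \eqref{Evol-u-tion3} collapse and one obtains
\[
\partial_t g = f'\Delta g + \frac{2f'}{H}\inprod{\nabla g}{\nabla H} + f'\frac{H^n}{K}|\nabla g|^2 - \gamma\!\left[ f'\frac{Y^2}{H^2} + f''\Big(b^{ij} - \tfrac{n}{H}g^{ij}\Big)\nabla_i H\nabla_j H \right] - \gamma\,\frac{Hf' - f}{H}\big(n|A|^2 - H^2\big) .
\]
This is the $\phi\equiv 1$ case of the assertion, and serves as a useful sanity check.

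Next I would apply the product rule $\partial_t g_\sigma = \phi\,\partial_t g + g\,\phi'(H)\,\partial_t H$ and substitute \eqref{TimeChange4} for $\partial_t H$; the summand $fg\phi'|A|^2$ in the statement is precisely the contribution of the $f|A|^2$ term of $\partial_t H$. To produce $f'\Delta g_\sigma$ I would use
\[
\Delta g_\sigma = \phi\,\Delta g + 2\phi'\inprod{\nabla g}{\nabla H} + g\phi''|\nabla H|^2 + g\phi'\Delta H ,
\]
and observe that in $\partial_t g_\sigma$ the Laplacian of $H$ occurs with coefficient $g\phi'f'$, so that $\phi f'\Delta g + g\phi'f'\Delta H = f'\Delta g_\sigma - 2f'\phi'\inprod{\nabla g}{\nabla H} - f'g\phi''|\nabla H|^2$. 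Likewise, from $\nabla g_\sigma = \phi\nabla g + g\phi'\nabla H$ I would replace $\nabla g$ by $\tfrac{1}{\phi}(\nabla g_\sigma - g\phi'\nabla H)$ throughout, so that $\inprod{\nabla g}{\nabla H} \mapsto \tfrac{1}{\phi}\inprod{\nabla g_\sigma}{\nabla H} - \tfrac{g\phi'}{\phi}|\nabla H|^2$ and $|\nabla g|^2 \mapsto \tfrac{1}{\phi^2}\big(|\nabla g_\sigma|^2 - 2g\phi'\inprod{\nabla g_\sigma}{\nabla H} + g^2\phi'^2|\nabla H|^2\big)$.

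What remains is bookkeeping. Collecting the coefficient of $|\nabla g_\sigma|^2$ gives $\tfrac{f'}{\phi}\tfrac{H^n}{K}$ after using $\tfrac{1}{\gamma} = \tfrac{H^n}{K}$; collecting the coefficient of $\inprod{\nabla g_\sigma}{\nabla H/H}$ from the pieces $-\tfrac{2f'\phi'}{\phi}$, $\tfrac{2f'}{H}$ and $-\tfrac{2f'g\phi'}{\phi\gamma}$ produces $2f'\big[1 - \tfrac{H\phi'}{\phi}\big(1 + \tfrac{H^n}{K}g\big)\big]$; and collecting the coefficient of $\tfrac{|\nabla H|^2}{H^2}g_\sigma$ — to which the $g\phi''$ and $g\phi'f''$ terms, the cross terms coming from $\Delta g_\sigma$ and $\nabla g_\sigma$, and the $g^2\phi'^2|\nabla H|^2$ piece of $|\nabla g|^2$ all contribute — yields the bracketed expression in the statement. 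The reaction terms simply acquire the overall factor $\phi$, giving $-\phi\gamma[\cdots]$. I expect the only real difficulty to be keeping the signs and the powers of $\phi$, $H$ and $\gamma$ straight while assembling that last $|\nabla H|^2$ coefficient, where the largest number of summands overlap; everything else is mechanical differentiation.
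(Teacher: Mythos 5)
Your proposal is correct and follows essentially the same route as the paper: substitute the evolution equation for $\gamma$ (equivalently for $g=-\gamma+1/n^n$), apply the product rule $\partial_t g_\sigma = \phi\,\partial_t g + g\phi'\,\partial_t H$ with \eqref{TimeChange4}, and then rewrite all $\nabla g$, $|\nabla g|^2$, $\Delta g$ terms in terms of $g_\sigma$ via $\nabla g_\sigma = \phi\nabla g + g\phi'\nabla H$ and its consequences. Your intermediate evolution equation for $g$ (with the gradient terms of \eqref{Evol-u-tion3} collapsed to $\tfrac{2f'}{H}\inprod{\nabla g}{\nabla H} + f'\tfrac{H^n}{K}|\nabla g|^2$) is a correct and useful sanity check — it is precisely the $\phi\equiv1$ specialization of the lemma — but it is the same decomposition the paper uses, merely staged slightly differently.
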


\begin{proof}
	We compute
	\begin{subequations}
	\begin{align}
	\nabla g = & {} -\nabla\gamma \, , \\
	\nabla g_\sigma = & {} \phi\nabla g + g\phi'\nabla H \, , \label{nablagsigma} \\
	\Delta g_\sigma = & {} -\phi\Delta\gamma + g(\phi''|\nabla H|^2 + \phi'\Delta H) + 2\phi'\inprod{\nabla g}{\nabla H} \, , \label{Laplacegsigma} \\
	\inprod{\nabla\gamma}{\nabla K} = & {} H^n|\nabla\gamma|^2 + \frac{nK\inprod{\nabla\gamma}{\nabla H}}{H} = H^n|\nabla g|^2 - \frac{nK\inprod{\nabla g}{\nabla H}}{H} \, , \label{gradugradK} \\
	|\phi\nabla g|^2 = & {} |\nabla g_\sigma|^2 - 2g\phi'\inprod{\nabla g_\sigma}{\nabla H} + g^2 (\phi')^2|\nabla H|^2 \, . \label{phigradgsquared}
	\end{align}
	\end{subequations}
	Using these,
	\begin{align*}
	\frac{\d g_\sigma}{\d t} = {} & \phi\frac{\d g}{\d t} + g\phi'\frac{\d H}{\d t} = -\phi\frac{\d\gamma}{\d t} + g\phi'\frac{\d H}{\d t} \\
	\stackrel{\eqref{Evol-u-tion3}}{=} {} & - f'\phi\Delta\gamma - f'\phi(n+1) \inprod{\nabla\gamma}{\frac{\nabla H}{H}} + f'\phi\frac{n-1}{nK}\inprod{\nabla\gamma}{\nabla K} + f'\phi\frac{H^n}{nK}|\nabla\gamma|^2 \\
	& - \phi\gamma\frac{Hf' - f}{H}(n|A|^2 - H^2) - f'\phi\gamma\frac{Y^2}{H^2} - f''\phi\gamma\left[ \left( b^{ij} - \frac{n}{H}g^{ij} \right)\nabla_i H\nabla_j H \right] \\
	& + g\phi' \left[ f'\Delta H  + f'' |\nabla H|^2 + f|A|^2 \right] \\
	\stackrel[\eqref{gradugradK}]{\eqref{Laplacegsigma}}{=} {} & f' \Delta g_\sigma - f'g\phi''|\nabla H|^2 - 2f'\phi' \inprod{\nabla g}{\nabla H} + 2f'\inprod{\phi\nabla g}{\frac{\nabla H}{H}} + \frac{f'}{\phi}\frac{H^n}{K}|\phi\nabla g|^2 \\
	& - \phi\gamma\frac{Hf' - f}{H}(n|A|^2 - H^2) - f'\phi\gamma\frac{Y^2}{H^2} - f''\phi\gamma\left[ \left( b^{ij} - \frac{n}{H}g^{ij} \right)\nabla_i H\nabla_j H \right] \\
	& + g\phi'\left[ f'' |\nabla H|^2 + f|A|^2 \right] \\
	\stackrel{\eqref{phigradgsquared}}{=} {} & f' \Delta g_\sigma + \frac{f'}{\phi}\frac{H^n}{K}|\nabla g_\sigma|^2 + 2f'\left( 1 - \frac{H\phi'}{\phi} \right)\inprod{\phi\nabla g}{\frac{\nabla H}{H}} - 2f'g\frac{H^n}{K}\frac{H\phi'}{\phi}\inprod{\nabla g_\sigma}{\frac{\nabla H}{H}} \\
	& - f'g\phi''|\nabla H|^2 + f''g\phi'|\nabla H|^2 + \frac{f'g^2(\phi')^2}{\phi}\frac{H^n}{K}|\nabla H|^2 \\
	& - \phi\gamma\frac{Hf' - f}{H}(n|A|^2 - H^2) + fg\phi'|A|^2 - f'\phi\gamma\frac{Y^2}{H^2} - f''\phi\gamma\left[ \left( b^{ij} - \frac{n}{H}g^{ij} \right)\nabla_i H\nabla_j H \right] \, .
	\end{align*}
	The evolution equation now follows by applying \eqref{nablagsigma} to the $\inprod{\phi\nabla g}{\nabla H/H}$ term.
\end{proof}

In order to prove the main theorem about $g_\sigma$, we also need the following result, proved as Lemma 2.5 in \cite{Schulze2}, which holds for any convex surface with $h_{ij} \geq \varepsilon Hg_{ij}$:

\begin{lemma}\label{LastTwoTerms}
	If a convex surface is such that $\lambda_i \geq \varepsilon H > 0$ for some $\varepsilon > 0$ and all $i = 1,...,n$, then there exists $\delta > 0$ such that \[
	\frac{n|A|^2 - H^2}{H^2} \geq \delta \left( \frac{1}{n^n} - \frac{K}{H^n} \right) \, .
	\]
\end{lemma}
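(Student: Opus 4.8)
The plan is to reduce the inequality to a statement about the normalised principal curvatures on a compact set, and then close the argument by continuity and compactness. Write $\lambda_1,\dots,\lambda_n$ for the principal curvatures and set $x_i = \lambda_i/H$, so that $\sum_i x_i = 1$ and the hypothesis $\lambda_i \ge \varepsilon H$ becomes $x_i \ge \varepsilon$. In these variables $|A|^2/H^2 = \sum_i x_i^2$ and $K/H^n = \prod_i x_i$, and a one-line computation using $\sum_i x_i = 1$ gives
\begin{equation*}
\frac{n|A|^2 - H^2}{H^2} = n\sum_i x_i^2 - 1 = \sum_{i<j}(x_i - x_j)^2 =: p(x), \qquad \frac{1}{n^n} - \frac{K}{H^n} = \frac{1}{n^n} - \prod_i x_i =: q(x) .
\end{equation*}
So it suffices to produce $\delta > 0$ with $p(x) \ge \delta\, q(x)$ on the compact simplex slice $\Delta_\varepsilon := \{x \in \R^n : \sum_i x_i = 1,\ x_i \ge \varepsilon\ \forall i\}$, which is nonempty and contains $x_0 := (1/n,\dots,1/n)$ because $\varepsilon \le 1/n$ by Lemma \ref{ExistenceOfEpsilon}.

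Next I would observe that $p$ and $q$ are continuous and nonnegative on $\Delta_\varepsilon$ and that each vanishes \emph{only} at $x_0$: $p(x) = 0$ forces all the $x_i$ equal, and $q(x) = 0$ is precisely the equality case of the arithmetic--geometric mean inequality $\prod_i x_i \le (n^{-1}\sum_i x_i)^n = n^{-n}$. Hence the ratio $\Phi := p/q$ is well defined, continuous and strictly positive on $\Delta_\varepsilon \setminus \{x_0\}$, and the whole problem collapses to checking that $\Phi$ does not degenerate to $0$ as $x \to x_0$.

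The key computation is therefore the local behaviour at $x_0$. Writing $x_i = 1/n + y_i$ with $\sum_i y_i = 0$, one gets $p(x) = n\sum_i y_i^2$ exactly, while, since $\sum_{i<j} y_i y_j = -\tfrac12\sum_i y_i^2$,
\begin{equation*}
\prod_i x_i = n^{-n}\prod_i(1 + n y_i) = n^{-n}\Bigl(1 - \tfrac{n^2}{2}\sum_i y_i^2 + O(|y|^3)\Bigr), \qquad\text{so}\qquad q(x) = \tfrac{n^{2-n}}{2}\sum_i y_i^2 + O(|y|^3) .
\end{equation*}
Thus $\Phi(x) \to 2n^{\,n-1} > 0$ as $x \to x_0$, so $\Phi$ extends to a continuous, strictly positive function on the compact set $\Delta_\varepsilon$; taking $\delta := \min_{\Delta_\varepsilon}\Phi > 0$ and unwinding the substitution $x_i = \lambda_i/H$ yields the asserted inequality.

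The only genuine work is this last expansion: one must confirm that $\prod_i x_i$ attains a \emph{nondegenerate} maximum at $x_0$ (negative-definite Hessian restricted to $\{\sum_i y_i = 0\}$), so that numerator and denominator vanish to the same — second — order and their quotient stays bounded away from $0$; everything else is continuity plus compactness. The hypothesis $\lambda_i \ge \varepsilon H$ is used only to make the domain $\Delta_\varepsilon$ compact, which is all that is needed here.
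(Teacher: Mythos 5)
Your proof is correct. The paper itself does not supply a proof of this lemma---it simply cites Schulze's Lemma 2.5---and your argument (normalize the principal curvatures to the compact simplex slice $\Delta_\varepsilon$, observe that $n\sum x_i^2 - 1 = \sum_{i<j}(x_i-x_j)^2$ and $n^{-n} - \prod x_i$ both vanish only at the barycenter and both vanish there to precisely second order with the same quadratic form $\sum y_i^2$ up to a constant, then invoke compactness) is exactly the standard route that underlies Schulze's proof. Your computations, including the identity $n\sum x_i^2 - 1 = \sum_{i<j}(x_i-x_j)^2$ and the limit $\Phi \to 2n^{n-1}$ at the barycenter, check out.
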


We now make the choice \[
\phi(H) = (\ln\hat{H})^\sigma \, .
\] From this, and by applying the maximum principle, we can show the following theorem.

\begin{theorem}\label{PinchingResult}
	Let the initial hypersurface $\M_0$ satisfy the pinching condition \eqref{PinchingEquation}, and $g_\sigma$ be as in \eqref{Definingg}. Then there exists $\sigma > 0$ such that
	\begin{equation}\label{ScalingInvariantEquation}
	g_\sigma(p,t) \leq \max_{p\in \M_0}g_\sigma(p,0) \quad \forall(p,t) \in \M\times[0,T) \, .
	\end{equation}
\end{theorem}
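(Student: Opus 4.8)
The plan is to feed the evolution equation for $g_\sigma$ from Lemma \ref{Evolutiong}, with the choice $\phi(H) = (\ln\hat H)^\sigma$, into the parabolic maximum principle, choosing $\sigma>0$ small (depending only on $n$, $\alpha$ and the pinching constant $C(n,\alpha)$ of Proposition \ref{PinchingProposition}). Fix a point $(p_0,t_0)$, $t_0>0$, at which $g_\sigma$ attains a spatial maximum. There $\nabla g_\sigma = 0$ and $\Delta g_\sigma \leq 0$, so in the formula of Lemma \ref{Evolutiong} the term $f'\Delta g_\sigma$ is nonpositive, and the two inner-product terms $\frac{f'}{\phi}\frac{H^n}{K}|\nabla g_\sigma|^2$ and $2f'[\cdots]\inprod{\nabla g_\sigma}{\nabla H/H}$ vanish outright — so their possibly-unbounded coefficients are irrelevant. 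It therefore suffices to show that the remaining four ``reaction'' terms sum to something nonpositive at $(p_0,t_0)$ once $\sigma$ is small.

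The first step I would take is to handle the last two reaction terms, $-\phi\gamma\frac{Hf'-f}{H}(n|A|^2-H^2)$ and $fg\phi'|A|^2$, together. Preservation of convexity gives $|A|^2\leq H^2$, so $fg\phi'|A|^2 \leq fg\phi' H^2$; Lemma \ref{LastTwoTerms} gives $n|A|^2 - H^2 \geq \delta H^2 g$ for some $\delta=\delta(\varepsilon)>0$, so the preceding term is $\leq -\delta\phi\gamma(Hf'-f)Hg$. Hence the sum of these two is at most $gH\phi(Hf'-f)\big(\tfrac{f\phi'H}{\phi(Hf'-f)} - \delta\gamma\big)$. The whole point of taking $\phi = (\ln\hat H)^\sigma$ is that, using \eqref{Hf'-f}, one finds $\tfrac{H\phi'}{\phi} = \tfrac{\sigma H}{\hat H\ln\hat H}$ and $\tfrac{f}{Hf'-f} = \tfrac{\hat H\ln\hat H}{\alpha H}$, so that $\tfrac{f\phi'H}{\phi(Hf'-f)} = \tfrac{\sigma}{\alpha}$ is a constant. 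Since the pinching \eqref{PinchingEquation} is preserved, $\gamma\geq C(n,\alpha)$, so this contribution is nonpositive as soon as $\sigma \leq \alpha\delta C(n,\alpha)$.

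Next I would absorb the first reaction term, $f'\tfrac{H\phi'}{\phi}\big[\tfrac{Hf''}{f'} - \tfrac{H\phi''}{\phi'} - 2(1-\tfrac{H\phi'}{\phi}) + \tfrac{H^n}{K}\tfrac{H\phi'}{\phi}\big]\tfrac{|\nabla H|^2}{H^2}g_\sigma$, into the term $-\phi\gamma\big[f'\tfrac{Y^2}{H^2} + f''(b^{ij}-\tfrac{n}{H}g^{ij})\nabla_i H\nabla_j H\big]$. The estimate established inside the proof of Proposition \ref{PinchingProposition} shows $f'\tfrac{Y^2}{H^2} + f''(b^{ij}-\tfrac{n}{H}g^{ij})\nabla_i H\nabla_j H \geq \tfrac{\varepsilon^2}{4}f'\tfrac{|\nabla H|^2}{H^2}$, so that second term is $\leq -\tfrac{\varepsilon^2}{4}f'\gamma\phi\tfrac{|\nabla H|^2}{H^2}$. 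The bracket multiplying $g_\sigma$ in the first term is uniformly bounded above: $\tfrac{Hf''}{f'}\leq 2\alpha$ by the hypotheses on $f$; $\tfrac{H\phi'}{\phi}=\tfrac{\sigma H}{\hat H\ln\hat H}<\sigma$ and $-\tfrac{H\phi''}{\phi'}$ are bounded because $\tfrac{H}{\hat H}\leq 1$ and $\ln\hat H\geq 1$ (here $H_0\geq e$ is used); and $\tfrac{H^n}{K}\tfrac{H\phi'}{\phi} = \tfrac{1}{\gamma}\tfrac{\sigma H}{\hat H\ln\hat H}\leq \tfrac{\sigma}{C(n,\alpha)}$, again by the preserved pinching. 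Using $g_\sigma = g\phi\leq \phi/n^n$, the first term is thus at most $\sigma$ times a constant depending only on $n,\alpha,C$, times $f'\phi\tfrac{|\nabla H|^2}{H^2}$, and so is dominated by the (strictly negative) second term once $\sigma$ is sufficiently small. Combining with the previous paragraph, $\partial_t g_\sigma \leq 0$ at $(p_0,t_0)$, and the maximum principle yields \eqref{ScalingInvariantEquation}.

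The main obstacle is the bookkeeping required to check that \emph{every} $H$-dependent coefficient arising from Lemma \ref{Evolutiong} is uniformly bounded on the region actually swept out by the flow, on which $H$ is unbounded; this is exactly where the two structural inputs are needed — the algebraic bound $\tfrac{Hf''}{f'}\leq 2\alpha$ (a consequence of the choice $H_0\geq e$) and the preserved lower bound $\gamma\geq C(n,\alpha)$ from Proposition \ref{PinchingProposition}, which is what tames the otherwise dangerous factor $H^n/K = 1/\gamma$. Once this boundedness is in place, the admissible $\sigma$ is pinned down by the two requirements $\sigma\leq \alpha\delta C(n,\alpha)$ and $\sigma\leq (\text{const})\,\varepsilon^2 C(n,\alpha)$, and the argument closes; a minor routine point is to phrase the maximum-principle step (e.g. via a first-time-of-contact argument) so that $g_\sigma$, and not $g_\sigma$ up to an exponential factor, is controlled — which works precisely because all surviving reaction terms are either nonpositive or of the form (nonpositive coefficient)$\cdot g_\sigma$.
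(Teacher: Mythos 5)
Your proof is correct and follows essentially the same route as the paper: at a spatial maximum the gradient terms $L_1$ drop out, $|A|^2\leq H^2$ together with Lemma \ref{LastTwoTerms} and the identity $\tfrac{f\phi' H}{\phi(Hf'-f)}=\tfrac{\sigma}{\alpha}$ handle $L_4$ (yielding $\sigma\leq\alpha\delta C$), and the estimate from Proposition \ref{PinchingProposition}, $\tfrac{Hf''}{f'}\leq 2\alpha$, $H^n/K\leq 1/C$, and $g_\sigma\leq\phi/n^n$ absorb $L_2$ into the strictly negative $L_3$ for small $\sigma$. The only cosmetic difference is that you make the cancellation $\tfrac{f\phi' H}{\phi(Hf'-f)}=\tfrac{\sigma}{\alpha}$ explicit rather than computing $L_4\leq -(C\alpha\delta-\sigma)g\tfrac{H^3}{\hat H}(\ln\hat H)^{\sigma+\alpha-1}$ directly; the smallness conditions on $\sigma$ agree up to the precise constants.
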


\begin{proof}
	We deal with each term in the evolution equation for $g_\sigma$ which was computed in \ref{Evolutiong} separately. Define
	\begin{align*}
	L_1 \, = {} & \frac{f'}{\phi}\frac{H^n}{K}|\nabla g_\sigma|^2 + 2f' \left[ 1 - \frac{H\phi'}{\phi}\left( 1 + \frac{H^n}{K}g \right) \right]\inprod{\nabla g_\sigma}{\frac{\nabla H}{H}} \\
	L_2 \, = {} & f'\frac{H\phi'}{\phi}\left[ \frac{Hf''}{f'} - \frac{H\phi''}{\phi'} - 2\left( 1 - \frac{H\phi'}{\phi} \right) + \frac{H^n}{K}\frac{H\phi'}{\phi} \right]\frac{|\nabla H|^2}{H^2}g_\sigma \\
	L_3 \, = {} & - \phi\gamma \left[ f'\frac{Y^2}{H^2} + f''\left( b^{ij} - \frac{n}{H}g^{ij} \right)\nabla_i H \nabla_j H \right] \\
	L_4 \, = {} & - \phi\gamma \frac{Hf' - f}{H}(n|A|^2 - H^2) + fg\phi'|A|^2 \, .
	\end{align*}
	To apply the maximum principle we must have $L_2 + L_3 + L_4 \leq 0$. The term $L_1$ contains only gradient terms and can be ignored since we are interested in the maximum of $g_\sigma$. The fact that $|A|^2 \leq H^2$ and Lemmas \ref{ExistenceOfEpsilon} and \ref{LastTwoTerms} together imply that
	\begin{align*}
	L_3 \leq {} & - f'\phi\frac{C\varepsilon^2}{4}\frac{|\nabla H|^2}{H^2} \, , \\
	\intertext{and}
	L_4 \leq {} & - (C\alpha\delta - \sigma) g\frac{H^3}{\hat{H}}(\ln\hat{H})^{\sigma+\alpha-1} \, .
	\end{align*}
	Moreover, since $H^n/K \leq 1/C$ and $g \leq 1/n^n$, \[
	L_2 \leq f'\phi\frac{\sigma}{n^n}\left( 2\alpha + \sigma\left[ 1 + \frac{1}{Cn^n} \right] \right)\frac{|\nabla H|^2}{H^2} \, .
	\] Therefore we have 
	\begin{align*}
	\frac{\d g_\sigma}{\d t} - f'\Delta g_\sigma & \leq L_1 + f'\phi\frac{\sigma}{n^n}\left( 2\alpha + \sigma\left[ 1 + \frac{1}{Cn^n} \right] \right)\frac{|\nabla H|^2}{H^2} - f'\phi\frac{C\varepsilon^2}{4}\frac{|\nabla H|^2}{H^2} - (C\alpha\delta - \sigma) g\frac{H^3}{\hat{H}}(\ln\hat{H})^{\sigma+\alpha-1} \\
	& = L_1 + \frac{f'\phi}{n^n}\left( \sigma\left( 2\alpha + \sigma\left[ 1 + \frac{1}{Cn^n} \right] \right) - \frac{C\varepsilon^2 n^n}{4} \right) \frac{|\nabla H|^2}{H^2} - (C\alpha\delta - \sigma) g\frac{H^3}{\hat{H}}(\ln\hat{H})^{\sigma+\alpha-1} \, .
	\end{align*}
	Hence, by choosing \[
	0 < \sigma \leq \min\left\{ C\alpha\delta \, , \, \frac{-2\alpha + \sqrt{4\alpha^2 + C\varepsilon^2n^n + \varepsilon^2}}{2(1 + 1/Cn^n)} \right\} \, ,
	\] we see that the last three terms in the evolution equation for $g_\sigma$ in Lemma \ref{Evolutiong} are negative, which is the correct sign to apply the maximum principle, giving the result.
\end{proof}

\section{Convergence to a Sphere}
We now show that under a suitable scaling procedure near a singularity the limiting hypersurface becomes spherical. Assume that a singularity occurs at time $T$. Using the evolution equation \eqref{TimeChange4} from Proposition \ref{TimeChange} and $|A|^2 \leq H^2$, we obtain
\[
\frac{\d}{\d t}H_{\max} \leq H_{\max}^3 (\ln\hat{H}_{\max})^\alpha \, .
\] Define the variable $v(t)$ by \[
v(t) := H_{\max}(t) \quad \text{ so that } \quad \frac{\d v}{\d t} \leq v^3 (\ln\hat{v})^\alpha \, .
\] Let the function $J_\alpha : (0,\infty) \longrightarrow (-\infty,0)$ be a solution of the differential equation \[
J_\alpha'(x) = \frac{1}{x^3 (\ln\hat{x})^\alpha} \, ; \quad J_\alpha(x) \longrightarrow 0 \text{ as } x \longrightarrow \infty \, .
\] This function $J_\alpha$ has non-zero derivative on $(0,\infty)$, so is a bijection, and for large enough $x\in(0,\infty)$,
\begin{equation}\label{JSandwich}
-\frac{1}{2x^2} < J_\alpha(x) < -\frac{1}{3x^3} \, .
\end{equation}
For $0 \leq t < s \leq T$ we therefore have \[
T - t \geq s - t \geq \int_{v(t)}^{v(s)} \frac{1}{v^3(\ln\hat{v})^\alpha} \equiv J_\alpha(v(s)) - J_\alpha(v(t)) \, ,
\]
and since we know that $v(s) = H_{\max}(s) \longrightarrow \infty$ as $s \longrightarrow T$, the defining boundary condition for $J_\alpha$ gives us that $J_\alpha(v(s)) \longrightarrow 0$ as $s \longrightarrow T$. Thus \[
\frac{1}{T-t} \leq -\frac{1}{J_\alpha(v(t))} \, .
\] Define the function $G_\alpha : (0,\infty) \longrightarrow (0,\infty)$ by \[
G_\alpha(x) := -\frac{1}{J_\alpha(x)} \, .
\] For large enough $x\in(0,\infty)$, inequality \eqref{JSandwich} implies \[
2x^2 < G_\alpha(x) < 3x^3 \, .
\] $G_\alpha$ is also a bijection and is therefore invertible. If we denote its inverse by $G_\alpha^{-1}$, then we have $G_\alpha(x) \longrightarrow \infty$ and $G_\alpha^{-1}(x) \longrightarrow \infty$ as $x \longrightarrow \infty$. Finally, we have $G_\alpha^{-1}(x) \leq \sqrt{x/2}$. \\

We refer to a singularity as type-1 if there exists $C_0 > 0$ such that \[
H_{\max}(t) \leq C_0 G_\alpha^{-1}\left( \frac{1}{T-t} \right) \ \text{ for all } \ t \in [0,T) \, .
\] If no such $C_0$ exists we call it type-2. We consider a different scaling procedure for each type.

Assuming first that we have a type 1 singularity, we can choose a sequence of points and times $(x_k,t_k) \in \M^n \times [0,T-1/k]$ such that
\begin{equation}\label{ChoosingSequence1}
H(x_k,t_k) = \max_{\stackrel{t\leq T-1/k}{x\in\M^n}} H(x,t) \, .
\end{equation}
Now we rescale the surfaces by \[
\tilde{F}_k(\cdot,\tau) = \frac{F\left( \cdot,t_k + \tau(T-t_k-1/k) \right) - F(x_k,t_k)}{\varepsilon_k}
\] where \[
\tau \in \left[\, -\frac{t_k}{T-t_k-1/k} \, , \, 1 \,\right]
\ \ \text{ and } \ \ \varepsilon_k = \frac{1}{G_\alpha^{-1}\left( \frac{1}{T-t_k-1/k} \right)} \, .
\] With this choice of scaling, we have \[
\tilde{H}_k(\cdot,\tau) = \frac{H(\cdot,t_k+\tau(T-t_k-1/k))}{G_\alpha^{-1}\left( \frac{1}{T-t_k-1/k} \right)} \leq C_0\frac{G_\alpha^{-1}\left( \frac{1}{T - t_k -\tau(T-t_k-1/k)} \right)}{G_\alpha^{-1}\left( \frac{1}{T - t_k - 1/k} \right)} \, ,
\] which is bounded for large enough $k$ by the continuity of $G_\alpha^{-1}$.

Now assume we have a type-2 singularity, meaning \[
\frac{H_{\max}(t)}{G_\alpha^{-1}\left( \frac{1}{T-t} \right)} \longrightarrow \infty \ \text{ as } t \longrightarrow T \, .
\]

Choose a sequence of points and times $(x_k,t_k) \in \M^n \times [0,T-1/k]$ such that
\begin{equation}\label{ChoosingSequence2}
\frac{H(x_k,t_k)}{G_\alpha^{-1}\left(\frac{1}{T-\frac{1}{k}-t_k}\right)} = \max_{\stackrel{t\leq T-1/k}{x\in\M^n}} \frac{H(x,t)}{G_\alpha^{-1}\left(\frac{1}{T-\frac{1}{k}-t}\right)} \, .
\end{equation}

Now scale the surfaces according to \[
\tilde{F}_k(\cdot,\tau) = \frac{F\left( \cdot,t_k + \frac{\tau}{G_\alpha(1/\varepsilon_k)} \right) - F(x_k,t_k)}{\varepsilon_k}
\] where \[
\tau \in [\, -G_\alpha(1/\varepsilon_k)t_k \, , \, G_\alpha(1/\varepsilon_k)(T-t_k-1/k) \,]
\ \ \text{ and } \ \ \varepsilon_k = \frac{1}{H(x_k,t_k)} \, .
\]
Since we are considering a type-2 singularity, \eqref{ChoosingSequence2} implies that $1/\varepsilon_k \longrightarrow \infty$ as $k \longrightarrow \infty$.

We can now consider the mean curvature of the rescaled surfaces, $\tilde{H}_k(\cdot,\tau)$. This satisfies \[
0 \leq \tilde{H}_k(\cdot,\tau) = \varepsilon_k H(\cdot,t) \leq \frac{G_\alpha^{-1}\left( \frac{1}{T - t_k -1/k - \tau/G_\alpha\left( 1/\varepsilon_k \right)} \right)}{G_\alpha^{-1}\left( \frac{1}{T - t_k - 1/k} \right)} \, .
\] As before, the right hand side of this is bounded for large $k$ by the continuity of $G_\alpha^{-1}$. The boundedness of $\tilde{H}$ also gives bounds on the higher derivatives of curvatures. Thus, by the Arzela-Ascoli theorem, we can extract a subsequence of $\tilde{F}_k(\cdot,t)$ which converges uniformly on compact subsets of $\R^{n+1}\times\R$ to a limit $\tilde{F}_\infty(\cdot,\tau)$.

In both cases (type-1 and type-2), the estimate in Theorem \ref{PinchingResult} implies that the scaled limiting surface is spherical. This is because the quantity $K/H^n$ is scaling-invariant, and so \eqref{ScalingInvariantEquation} implies \[
0 \leq\frac{1}{n^n} - \frac{\tilde{K}}{\tilde{H}^n} = \frac{1}{n^n} - \frac{K}{H^n} = \frac{g_\sigma}{(\ln\hat{H})^\sigma} \leq \frac{\max_{p\in \M_0}g_\sigma(p,0)}{(\ln\hat{H})^\sigma} \longrightarrow 0 \text{ as } t \longrightarrow T \, ,
\] for all $p\in \M$. Therefore at all points on $\tilde{F}_\infty$ we have \[
\frac{1}{n^n} = \frac{\tilde{K}}{\tilde{H}^n} \, ,
\] which implies that $\tilde{F}_\infty$ is spherical.

\subsection*{Acknowledgements}
The author wishes to thank their supervisor Aram Karakhanyan for suggesting this problem and for his helpful comments during the preparation of this paper.


\begin{thebibliography}{11}
	\bibitem{A-S} Alessandroni, R. and Sinestrari, C., \textit{Convexity estimates for a nonhomogeneous mean curvature flow}. Math. Z. (2010), 266:65-82.
	
	\bibitem{Andrews} Andrews, B., \textit{Contraction of convex hypersurfaces in Euclidean space}. Calc. Var. 2 (1994), 151-171.
	
	\bibitem{Chow} Chow, B., \textit{Deforming convex hypersurfaces by the nth root of the Gaussian curvature}. J. Diff. Geo. 22 (1985), 117-138.
	
	\bibitem{EckerBook}
	Ecker, K., \textit{Regularity Theory for Mean Curvature Flow}. Birkh\"{a}user, Boston, 2004.
	
	\bibitem{GerhardtBook}
	Gerhardt, C., \textit{Curvature Problems}. IP Series in Geometry and Topology, Volume 39, 2006.
	
	\bibitem{Huisken1984} Huisken, G., \textit{Flow by Mean Curvature of Convex Surfaces into Spheres}. J. Diff. Geo. 20 (1984), 237-266.
	
	\bibitem{Schulze1} Schulze, F., \textit{Evolution of Convex Hypersurfaces by Powers of the Mean Curvature}. Mathematische Zeitschrift 251 (2005), no. 4, 721-733.
	
	\bibitem{Schulze2}  Schulze, F., \textit{Convexity Estimates for Flows by Powers of the Mean Curvature}. Annali della Scuola Normale Superiore di Pisa - Classe di Scienze, Serie 5, Volume 5 (2006) no. 2, 261-277.
	
	\bibitem{ZhuBook} Zhu, X., \textit{Lectures on Mean Curvature Flows}. AMS/IP Studies in Advanced Mathematics, Volume 32, 2002.
\end{thebibliography}
\end{document}